      \title[Betti categories of graded modules,    applications to                     toric rings]
            {Betti categories of graded modules and applications to monomial ideals and toric rings}
     \author{Alexandre Tchernev}
     \author{Marco Varisco}
    \address{Department of Mathematics and Statistics, University at Albany, SUNY, USA\medskip}
      \email{\hemail{atchernev@albany.edu}}
    \urladdr{\hurl[]{math.albany.edu/~tchernev/}\medskip}
      \email{\hemail{mvarisco@albany.edu}}
    \urladdr{\hurl{albany.edu/~mv312143/}\medskip}
   \keywords{}
  \subjclass[2010]{Primary: \MSC{13D02}, \MSC{14M25}, \MSC{05E40}, \MSC{18G10}}
       \date{May 31, 2016}
\newcommand*{\hurl}  [2][www.]{\href{http://#1#2}{\nolinkurl{#2}}}
\newcommand*{\hemail}[1]{\href{mailto:#1}{\nolinkurl{#1}}}
\newcommand*{\DOI}   [1]{\href{http://dx.doi.org/#1}{\nolinkurl{#1}}}
\newcommand*{\MSC}   [1]{\href{http://www.ams.org/msc/msc2010.html?t=#1}{#1}}
\numberwithin{equation}{section}
\newcommand*{\definetheorem}[3][equation]{%
  \newaliascnt{#2}{#1}
  \newtheorem{#2}[#2]{#3}
  \aliascntresetthe{#2}
  \expandafter\def\csname #2autorefname\endcsname{#3}
}
\newcommand*{\definetheoremsame}[2][equation]{%
  \definetheorem[#1]{\zap@space#2 \@empty}{\capitalisewords{#2}}
}
\theoremstyle{plain}
\theoremstyle{definition}
\DeclareMathAlphabet{\matheurm}{U}{eur}{m}{n}
\newcommand*{\define}[5]{%
  \ifstrequal{#2}{*}{\expandafter#1\expandafter*}{\expandafter#1}%
  \csname#4#5\endcsname{#3{#5}}
}
\newcommand*{\TO}  [1][]{\stackrel{#1}{\longrightarrow}}
\newcommand*{\FROM}[1][]{\stackrel{#1}{\longleftarrow}}
\newcommand*{\MOR} [4][]{#2\colon#3\TO[#1]#4}
\newcommand*{\AND}{\qquad\text{and}\qquad}
\newcommand*{\ANDalign}[3][0pt]{\hskip#1\mathllap{#2}\AND\mathrlap{#3}}
\DeclarePairedDelimiterX\SET[2]{\{}{\}}{\,#1\;\delimsize\vert\;#2\,}
\newcommand*{\D}{\text{-}} % dash in math displays
\DeclareMathOperator*{\tensor}{\otimes}
\newcommand*{\op}{{\operatorname{op}}}
\newcommand*{\gf}{\Bbbk}
\newcommand*{\modules}  [1]{#1\D\Mod}
\newcommand*{\grmodules}[2]{#1\D\gr\,\modules{#2}}
\newcommand*{\cat}{\CC}
\newcommand*{\catwo}{\CD}
\newcommand*{\Betti}{\CB}
\newcommand*{\lubcat}{\CL}
\newcommand*{\iso}[1][\cat]{\matheurm{iso}#1}
\newcommand*{\cla}[1][\cat]{\cl(\obj#1)}
\newcommand*{\jrad}  {\mathfrak{J}}
\newcommand*{\spl}   {\mathfrak{S}}
\newcommand*{\free}  {\IF}
\newcommand*{\forget}{\IU}
\newcommand*{\act}[2]{#1\!\smallint\!#2}
\newcommand*{\cx}[1]{#1_\bullet}
\DeclareMathOperator{\HH}{H}
\newcommand*{\upset}[1]{\operatorname{\uparrow}#1}
\begin{document}

\begin{abstract}
We introduce the notion of Betti category for graded modules over suitably graded polynomial rings, and more generally for modules over certain small categories.
Our categorical approach allows us to treat simultaneously many important cases, such as monomial ideals and toric rings.
We prove that in these cases the Betti category is a finite combinatorial object that completely determines the structure of the minimal free resolution.
For monomial ideals, the Betti category is the same as the Betti poset that we studied in~\cite{bettiposets}.
We describe in detail and with examples how the theory applies to the toric case, and provide an analog for toric rings of the lcm-lattice for monomial ideals.
\end{abstract}

%%%%%%%%%%%%%%%%%%%%%%%%%%%%%%%%%%%%%%%%%%%%%%%%%%%%%%%%%%%%%%%%%%%%%%%%%%%%%%

\maketitle
\tableofcontents

%%%%%%%%%%%%%%%%%%%%%%%%%%%%%%%%%%%%%%%%%%%%%%%%%%%%%%%%%%%%%%%%%%%%%%%%%%%%%%

\section{Introduction}

Toric rings are the affine coordinate rings of toric varieties. They have many applications to various areas of mathematics and a rich underlying combinatorial structure, and have long been the subject of extensive research in geometry, combinatorics, and algebra;
see for example
\citelist{\cite{Fulton} \cite{Bruns-Gubeladze} \cite{Miller-Sturmfels} \cite{Peeva} \cite{Villarreal}}
and the references there.
An important open problem regarding the homological properties of toric rings is understanding the structure of their minimal free resolutions.
Some of the most successful approaches to this problem (see e.g.~\citelist{\cite{Bayer-Sturmfels} \cite{Peeva-Sturmfels} \cite{Peeva-Sturmfels-2}}) are closely related to methods used to attack the analogous open problem of understanding the structure of minimal free resolutions of monomial ideals.
However, regardless of the apparent similarities, these two open problems have always been treated separately, and there are important results on the monomial side that do not have toric counterparts yet.
For example, the lcm-lattice and the Betti poset of a monomial ideal are shown in~\cite{GPW} and~\cite{bettiposets}, respectively, to be discrete combinatorial objects whose isomorphism classes completely determine (in an implicit but natural way) the structure of the minimal free resolution of the monomial ideal.
% and it would be of significant interest to provide toric analogues for them.
In this paper, we introduce toric analogs of these objects, and prove that in a similar way they determine the minimal free resolutions of toric rings.

We achieve this by establishing a unified approach to the study of minimal free resolutions of monomial, toric, and, more generally, homogeneous ideals in suitably graded (not necessarily commutative) polynomial rings.
All of these are seen to be special cases of minimal projective resolutions of modules over certain small categories $\cat$ where all {E}ndomorphisms are {I}somorphisms (hence called EI categories),
which seem to provide the correct framework where graded syzygies over polynomial rings should be studied.

In general, given a commutative ring~$\gf$ and a small category~$\cat$,
a \emph{$\gf\cat$-module} is just a functor from $\cat$ to the category of $\gf$-modules, and a homomorphism of $\gf\cat$-modules is just a natural transformation of such functors.
Modules over categories have been studied extensively in algebraic topology, in particular to investigate equivariant phenomena; see e.g.~\citelist{\cite{tomDieck}*{Section~I.11} \cite{Lueck}*{Section~9} \cite{kc}*{Section~13}}.
More recently they have found applications to the study of stability phenomena in representation theory \citelist{\cite{Church-Ellenberg-Farb} \cite{Church-Ellenberg-Farb-Nagpal}}, and were first brought into the study of minimal free resolutions of monomial ideals by the authors in~\cite{bettiposets}.
We review the foundations of the theory of modules over categories in Sections~\ref{MOD-CAT} through~\ref{EI}.

We expand our investigations from \cite{bettiposets} by studying the properties of minimal projective resolutions of modules over a small EI category~$\cat$.
We describe a version of Nakayama's Lemma in this context (Lemma~\ref{Nak}), and establish sufficient conditions for the existence of minimal projective or free resolutions in \autoref{min-projres-local} and \autoref{min-projres-global}.

In \autoref{BETTI} we introduce the new notion of the \emph{Betti category} $\Betti(M)$ of a
$\gf\cat$-module $M$.
This is a full subcategory of $\cat$, and we show in our first main result, \autoref{main-technical}, that it is in a certain sense the ``smallest'' discrete combinatorial object that retains all essential homological information about $M$.
When $M$ corresponds to a monomial ideal, the Betti category is exactly the Betti poset, and we recover all results from~\cite{bettiposets}.
For a general graded module over a polynomial ring, the Betti category has as objects the degrees of the basis elements in the minimal graded free resolution of the module, and as morphisms the monomials of appropriate degrees; composition of morphisms is multiplication of monomials.
In particular, in many important cases such as monomial ideals and toric rings, the Betti category is finite and can be readily computed without needing to compute minimal free resolutions.

As a main application, in \autoref{TORIC} we analyze in greater detail the special case of toric rings over a field~$\gf$.
More specifically, let $Q$ be a submonoid of the free abelian monoid $\IN^r$, with $\CM=\{a_1,\dotsc, a_n\}$ the minimal generating set for $Q$.
The monoid ring $T=\gf[Q]$ is called a \emph{toric ring}.
We consider $T$ as a $\IZ^r$-graded module over the polynomial ring $R=\gf[x_1,\dotsc, x_n]$, via the homomorphism of polynomial rings $\MOR{\varphi}{\gf[x_1,\dotsc, x_n]}{\gf[\IN^r]}$ with image $T$ that sends $x_i$ to~$a_i$, and where the $\IZ^r$-grading of~$R$ is given by $\deg x_i = a_i$.
Let $\Betti(T)$ be the Betti category of~$T$.
Then:
\begin{itemize}
\item
We show in \autoref{main-theorem-1} that the bar resolution of the constant functor on $\Betti(T)$ lifts to a canonical (non minimal) finite free $\IZ^r$-graded resolution $\cx{F}(T)$ of\/~$T$ over\/~$R$.
This resolution differs from the hull resolution of\/~$T$ constructed in \cite{Bayer-Sturmfels}.
\medskip

\item
We show in \autoref{main-theorem-2} that the equivalence class of the Betti category~$\Betti(T)$ determines completely the structure of the minimal free resolution of $T$ over $R$, in the following sense:
if $T'$ is another toric ring over $\gf$ whose Betti category $\Betti(T')$ is equivalent to $\Betti(T)$, then the minimal free resolution of $T$ is obtained from the minimal free resolution of $T'$ in a functorial way.
In particular, this shows that the Betti category is the correct toric analogue of the Betti poset of a monomial ideal.
\medskip

\item
We introduce the \emph{lub-category} $\lubcat(Q)$, and we show in \autoref{main-theorem-3} that this is a toric analogue of the lcm-lattice of a monomial ideal.
\medskip
\end{itemize}

We also provide an example of two well-known non-isomorphic toric rings with equivalent (in fact isomorphic) Betti categories.
This raises the interesting question of characterizing those categories that are Betti categories of toric rings over a given field~$\gf$.
In the case of monomial ideals, the analogous question was answered in~\cite{bettiposets}*{Theorem~6.4 on page~5126}.

%%%%%%%%%%%%%%%%%%%%%%%%%%%%%%%%%%%%%%%%%%%%%%%%%%%%%%%%%%%%%%%%%%%%%%%%%%%%%%
%%%%%%%%%%%%%%%%%%%%%%%%%%%%%%%%%%%%%%%%%%%%%%%%%%%%%%%%%%%%%%%%%%%%%%%%%%%%%%

\section{Modules over categories}%, free resolutions, induction~and~restriction}
\label{MOD-CAT}

In this and the following two sections, we review the definitions of modules over a category, free modules, tensor products, and induction and restriction functors.
Our presentation is self-contained, but for additional details and examples the reader may consult~\cite{bettiposets} or \citelist{\cite{tomDieck}*{Section~I.11} \cite{Lueck}*{Section~9}}.
We also describe the equivalence between graded modules and modules over the associated action category in \autoref{mod-over-act-cat}, which is crucial to our approach, and the functorial bar resolution in \autoref{bar}.

Fix a commutative, associative, and unital ring~$\gf$, and denote by~$\modules{\gf}$ the corresponding abelian category of modules over~$\gf$ and $\gf$-linear homomorphisms.
Let $\cat$ be a small category.
The category~$\modules{\gf\cat}$ of \emph{modules over~$\cat$} is defined as the functor category $\Fun(\cat,\modules{\gf})$.
Explicitly: a $\gf\cat$-module is a functor from $\cat$ to~$\modules{\gf}$; a homomorphism of $\gf\cat$-modules is a natural transformation of such functors.
Notice that, if $M$ is a $\gf\cat$-module and $c$ and~$d$ are objects of~$\cat$, then there is a $\gf$-linear ``evaluation'' homomorphism
\begin{equation}
\label{eq:adj}
\gf[\mor_\cat(c,d)]\tensor M(c)
\TO
M(d)
\,,
\end{equation}
which we call a \emph{structure map} of~$M$, whose adjoint is obtained by linear extension of the function~$\mor_\cat(c,d)\TO\hom_\gf(M(c),M(d))$ expressing the functoriality of~$M$.

The category~$\modules{\gf\cat}$ is an abelian category, with kernels and images computed object-wise.
A sequence of $\gf\cat$-modules~$L\TO M\to N$ is exact if and only if $L(c)\TO M(c)\TO N(c)$ is exact for each $c\in\obj\cat$.

\begin{example}[the constant module]
The \emph{constant} $\gf\cat$-module is the functor
\[
\MOR{\const_\gf}{\cat}{\modules{\gf}}
\]
given by $\const_\gf(c)=\gf$ and $\const_\gf(u)=\id_\gf$ for all objects $c$ and all morphisms $u$ of the category~$\cat$.
\end{example}

In order to study graded modules, we are led to consider the following categories.

\begin{definition}[action categories]
\label{act-cat-def}
Given a monoid~$\Lambda$ and a set~$S$ equipped with a left action of~$\Lambda$, the \emph{action category}~$\act{\Lambda}{S}$ is the category with
\[
\obj\act{\Lambda}{S}=S
\AND
\mor_{\act{\Lambda}{S}}(s,t)=\SET{\lambda\in\Lambda}{\lambda s=t}
\,,
\]
and with composition given by multiplication in~$\Lambda$.
\end{definition}

\begin{example}
\label{act-cat-examples}
Given any monoid $\Lambda$ we can consider the action category~$\act{\Lambda}{\Lambda}$ of~$\Lambda$ acting on itself.
More generally, if $\MOR{f}{\Lambda}{\Gamma}$ is a homomorphism of monoids, then $\Lambda$ acts on~$\Gamma$ via~$f$, and we can form the action category~$\act{\Lambda}{\Gamma}$.

Now assume that $\Gamma$ is a group. %or.
Then $\mor_{\act{\Lambda}{\Gamma}}(\gamma,\gamma')=f^{-1}(\{\gamma'\gamma^{-1}\})$.
Moreover, if we also assume that $f$ is injective, then $\act{\Lambda}{\Gamma}$ is just the preorder~$(\Gamma,\leq)$, considered as a category, where $\gamma\le\gamma'$ if and only if $\gamma'=f(\lambda)\gamma$ for some (unique) $\lambda\in\Lambda$.
This relation is a partial order (i.e., $(\Gamma,\leq)$ is a poset) if and only if there are no nontrivial invertible elements in~$\Lambda$.
\end{example}

The next result follows at once from the definitions.

\begin{lemma}[graded modules as functors over action categories]
\label{mod-over-act-cat}
Let $\Lambda$ be a monoid, and let $\MOR{\deg}{\Lambda}{\Gamma}$ be a homomorphism of monoids.
Consider the monoid ring~$\gf[\Lambda]$ with the induced $\Gamma$-grading.
Then the functor
\[
\modules{
\gf\bigl(%[
\act{\Lambda}{\Gamma}\bigr)%]
}
\TO
\grmodules{\Gamma}{\gf[\Lambda]}
\,,\qquad
M\longmapsto\bigoplus_{\gamma\in\Gamma}M(\gamma)
\]
from the category of modules over the action category~$\act{\Lambda}{\Gamma}$ to the category of $\Gamma$\nobreakdash-graded $\gf[\Lambda]$-modules and $\Gamma$-graded homomorphisms is an equivalence of abelian categories.
\end{lemma}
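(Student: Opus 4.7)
The plan is to unpack the definitions on both sides and exhibit a functor in the opposite direction that is inverse on the nose (so the equivalence is actually an isomorphism of categories). I would proceed as follows.

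First, I would verify that the stated assignment is well defined. Given a functor $M\colon \act{\Lambda}{\Gamma}\to\modules{\gf}$, set $\widetilde{M}=\bigoplus_{\gamma\in\Gamma} M(\gamma)$, which carries the evident $\Gamma$-grading. Each $\lambda\in\Lambda$ induces, for every $\gamma\in\Gamma$, a morphism $\lambda\in\mor_{\act{\Lambda}{\Gamma}}(\gamma,\deg(\lambda)\gamma)$, hence a $\gf$-linear map $M(\lambda)\colon M(\gamma)\to M(\deg(\lambda)\gamma)$. Summing over $\gamma$ gives an endomorphism of $\widetilde{M}$ that raises degree by $\deg(\lambda)$. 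Functoriality of $M$ translates into the associativity and unit axioms for this action, so $\widetilde{M}$ becomes a $\Gamma$-graded $\gf[\Lambda]$-module; and a natural transformation $M\to M'$ sums to a degree-preserving $\gf[\Lambda]$-linear map $\widetilde{M}\to\widetilde{M'}$.

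Next I would construct the inverse functor. Given a $\Gamma$-graded $\gf[\Lambda]$-module $N=\bigoplus_{\gamma\in\Gamma} N_\gamma$, define $\widehat{N}\colon\act{\Lambda}{\Gamma}\to\modules{\gf}$ by $\widehat{N}(\gamma)=N_\gamma$, and send a morphism $\lambda\in\mor_{\act{\Lambda}{\Gamma}}(\gamma,\gamma')$ (so $\deg(\lambda)\gamma=\gamma'$) to the map $N_\gamma\to N_{\gamma'}$ given by left multiplication by~$\lambda$. The module axioms ensure that this is indeed a functor, and any $\Gamma$-graded $\gf[\Lambda]$-linear map $N\to N'$ restricts componentwise to a natural transformation $\widehat{N}\to\widehat{N'}$.

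Then I would check that the two assignments are mutually inverse. The composition $N\mapsto\widetilde{\widehat{N}}$ reconstructs $N$ along its grading, and $M\mapsto\widehat{\widetilde{M}}$ recovers $M$ on each object; in both cases the induced maps are identified via the very definitions of the grading and the action. So we get an isomorphism of categories, which is stronger than the stated equivalence.

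Finally, to confirm that this is an equivalence of \emph{abelian} categories, I would note that in $\modules{\gf\bigl(\act{\Lambda}{\Gamma}\bigr)}$ kernels, cokernels, images, and exactness are computed objectwise (as recorded in the paragraph preceding the lemma), while in $\grmodules{\Gamma}{\gf[\Lambda]}$ they are computed componentwise in each graded piece. Under the bijection $M(\gamma)\leftrightarrow\widetilde{M}_\gamma$ these two prescriptions coincide, so the functor is additive and exact and preserves all limits and colimits that exist. I do not expect a genuine obstacle: once the action categories in \autoref{act-cat-def} are interpreted correctly, every step is a direct unwinding of the definitions, which is why the author asserts the result follows at once.
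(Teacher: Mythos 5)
Your proposal is correct and is precisely the routine unwinding of definitions that the paper has in mind; indeed the paper gives no proof at all beyond the remark that the result ``follows at once from the definitions.'' Exhibiting the explicit inverse functor (which upgrades the equivalence to an isomorphism of categories) and noting that exactness is computed objectwise on one side and gradewise on the other is exactly the intended argument.
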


\begin{example}[multigraded modules over polynomial rings]
\label{mod-over-act-cat-2}
Consider the monoid $\Lambda=\IN^m$ and the inclusion homomorphism $\MOR{\deg=\incl}{\IN^m}{\IZ^m}$, where $m\ge1$ is an integer.
Notice that in this case the action category~$\act{\IN^m}{\IZ^m}$ is just the poset~$\IZ^m$, viewed as a category, with respect to the usual component-wise partial order: $(a_1,\dotsc,a_m)\leq(b_1,\dotsc,b_m)$ if and only if $a_i\leq b_i$ for each~$i$;
compare \autoref{act-cat-examples}.
Moreover, identifying the elements $(a_1,\dots, a_m)$ of $\IN^m$ with the
monomials $x_1^{a_1}\dots x_m^{a_m}$, we see that
$\IZ^m$-graded $\gf[\IN^m]$-modules are nothing but multigraded modules over the polynomial ring~$\gf[x_1,\dotsc,x_m]$, and \autoref{mod-over-act-cat} is the well-known observation that these are equivalent to functors $\IZ^m\TO\modules{\gf}$.
\end{example}

\begin{example}[graded modules over polynomial rings]
\label{graded-modules-example}
Consider %the monoid
$\Lambda=\IN^m$, where $m\ge1$ is an integer,
and the standard degree homomorphism $\MOR{\deg}{\IN^m}{\IZ}$ given by
$(a_1,\dots,a_m)\mapsto a_1+\dots+a_m$.
In this case the action category~$\act{\IN^m}{\IZ}$ is not a poset:
its objects are the integers, the morphisms from $k$ to $l$ can be
identified with the set of all monomials of degree $l-k$ in the polynomial
ring $\gf[x_1,\dots,x_m]$, and composition of morphisms is just multiplication
of monomials.
%viewed as a category, with respect to the usual component-wise partial
%order: $(a_1,\dotsc,a_m)\leq(b_1,\dotsc,b_m)$ if and only if $a_i\leq b_i$
%for each~$i$;
%compare \autoref{act-cat-examples}.
%Since $\IZ$-graded $\gf[\IN^m]$-modules are nothing but graded
%modules over the polynomial ring~$\gf[x_1,\dotsc,x_m]$, %and
%\autoref{mod-over-act-cat} is the well-known observation that these are %equivalent to functors $\IZ\TO\modules{\gf}$.
\end{example}

%%%%%%%%%%%%%%%%%%%%%%%%%%%%%%%%%%%%%%%%%%%%%%%%%%%%%%%%%%%%%%%%%%%%%%%%%%%%%%

\section{Free modules}
\label{FREE}

Here we recall the notion of free $\gf\cat$-modules.
Any $\gf\cat$-module~$M$ has an underlying collection of sets~$M(c)$ indexed by the objects $c\in\obj\cat$, and this data can be thought of as a functor to the category of sets from the discrete category~$\obj\cat$ (i.e., the subcategory of~$\cat$ whose only morphisms are the identities).
We call such functors $(\obj\cat)$-sets, and we obtain the \emph{forgetful functor}
\[
\MOR{\forget}{\modules{\gf\cat}}{(\obj\cat)\D\Sets}
\,.
\]
This forgetful functor~$\forget$ has a left adjoint
\[
\MOR{\free}{(\obj\cat)\D\Sets}{\modules{\gf\cat}}
\,,
\]
which is defined by sending an $(\obj\cat)$-set~$B$ to the $\gf\cat$-module
\[
\free B=
\adjustlimits
\bigoplus_{c\in\obj\cat}
\bigoplus_{\ B(c)\ }
\gf[\mor_\cat(c,-)]
\,.
\]
The unit of the adjunction is the natural transformation % of $(\obj\cat)$-sets
\(
\MOR{\eta}{B}{\forget\free B}
\)
that for each $c\in\obj\cat$ sends $b\in B(c)$ to~$\id_c$ in the direct summand~$\gf[\mor_\cat(c,c)]\leq\free B(c)$ indexed by~$b$.

\begin{definition}
We say that the $\gf\cat$-module~$\free B$ is \emph{free with basis} $\MOR{\eta}{B}{\forget\free B}$, and we define a $\gf\cat$-module to be \emph{free} if it is isomorphic to one in the image of the functor~$\free$.
\end{definition}

It is a standard exercise to verify that free $\gf\cat$-modules are projective, and that a $\gf\cat$-module is projective if and only if it is a direct summand of a free $\gf\cat$-module.

\begin{example}[free module on one generator]
(a)
Let $c$ be an object of $\cat$, and let $B$ be the
$\obj\cat$-set with $B(d)=\emptyset$ for $d\ne c$ and with $B(c)=pt$.
Let $\MOR{\eta}{B}{\gf[\mor_\cat(c,-)]}$ be given by $\eta(pt)=\id_c$.
This makes the $\gf\cat$-module $\gf[\mor_\cat(c,-)]$ free, and we call it
a \emph{free module on one generator} (based at $c$).

(b)
Under  the equivalence of Lemma~\ref{mod-over-act-cat}, a
free $\gf(\act{\Lambda}{\Gamma})$-module on one generator based at $c\in\Gamma$
corresponds to a free $\Gamma$-graded $\gf[\Lambda]$-module on one homogeneous generator of degree $c$.
\end{example}

\begin{definition}
Given a small category~$\cat$, we denote by~$\cla$ the set of isomorphism classes of objects of~$\cat$, and by~$\MOR{\cl}{\obj\cat}{\cla}$ the function that sends each object to its isomorphism class.
We say that an $(\obj\cat)$-set $B$ is \emph{of finite type} if
\[
\#\SET{\cl(c)\in\cla}{B(c)\neq\emptyset}<\infty
\,;
\]
we say that $B$ is \emph{finite} if it is of finite type and moreover $\#B(c)<\infty$ for each $c\in\obj\cat$.
We say that a $\gf\cat$-module~$M$ is \emph{finitely generated} (or \emph{of finite type}) if $M$ is a quotient of a free module with a finite (or of finite type, respectively) basis.
\end{definition}

\begin{definition}[unnormalized bar resolution]
\label{bar}
Let $M$ be a $\gf\cat$-module.

(a) For each $n\ge0$, define a $\gf\cat$-module~$F_n^u$ by setting
\[
F_n^u(-)=
\bigoplus_{c_0\to\dotsb\to c_n}
\gf[\mor_\cat(c_n,-)]\tensor M(c_0)
\,,
\]
where the direct sum is indexed over all $n$-tuples of composable morphisms of~$\cat$
\[
c_0\TO[u_1] c_1\TO\dotsb\TO c_{n-1}\TO[u_n] c_n
\,,
\]
i.e., all $n$-simplices of the nerve of~$\cat$.
When $n=0$ this reduces to
\[
F_0^u(-)=
\bigoplus_{c_0\in\obj\cat}
\gf[\mor_\cat(c_0,-)]\tensor M(c_0)
\,,
\]
and the structure maps~\eqref{eq:adj} of~$M$ yield a $\gf\cat$-module homomorphism $\MOR{\varepsilon}{F_0^u}{M}$.

(b) For each $n>0$ and $0\le i\le n$, define morphisms
\[
\MOR{\partial_n^i}{F_n^u}{F_{n-1}^u}
\]
as follows:
\begin{itemize}
\item
If $0<i<n$, then $\partial_n^i$ sends the summand indexed by
\[
c_0\TO\dotsb\TO c_{i-1}\TO[u_{i\vphantom{+1}}]c_i\TO[u_{i+1}] c_{i+1}\TO\dotsb\TO c_n
\]
to the summand indexed by
\[
c_0\TO\dotsb\TO c_{i-1}\xrightarrow{u_{i+1}u_i} c_{i+1}\TO\dotsb\TO c_n
\]
using the identity on~$\gf[\mor_\cat(c_n,-)]\tensor M(c_0)$.
Here and in the following, the unlabeled morphisms are left unchanged.

\item
If $i=0$, then $\partial_n^0$ sends the summand indexed by
\(
c_0\TO[\smash{u_1}]c_1\TO\dotsb\TO c_n
\)
to the~summand indexed by
\(
c_1\TO\dotsb\TO c_n
\)
using the map
\[
\MOR{\id\tensor M(u_1)}%
{\gf[\mor_\cat(c_n,-)]\tensor M(c_0)}%
{\gf[\mor_\cat(c_n,-)]\tensor M(c_1)}
\,.
\]

\item
If $i=n$, then $\partial_n^n$ sends the summand indexed by
\(
c_0\TO\dotsb\TO c_{n-1}\TO[\smash{u_n}]c_n
\)
to the summand indexed by
\(
c_0\TO\dotsb\TO c_{n-1}
\)
using the map
\[
\MOR{u_n^*\tensor\id}%
{\gf[\mor_\cat(c_n,    -)]\tensor M(c_0)}%
{\gf[\mor_\cat(c_{n-1},-)]\tensor M(c_0)}
\,.
\]
\end{itemize}
We then let
\[
\MOR{d_n=\sum_{i=0}^n(-1)^i\partial_n^i}{F_n^u}{F_{n-1}^u}
\,,
\]
thus obtaining a chain complex~$\cx{B}^u(M)$ of $\gf\cat$-modules, together with an augmentation~$\MOR{\varepsilon}{F_0^u}{M}$.
% $\varepsilon\partial_1^0=\varepsilon\partial_1^1$
\end{definition}

\begin{remark}\label{degeneracy-remarks}
(a)
The verification that $\cx{B}^u(M)$ is a chain complex is straightforward.
In fact, $\cx{B}^u(M)$ is the associated (unnormalized) chain complex of the semi-simplicial $\gf\cat$-module defined by the maps $\partial_n^i$ above; compare for example \cite{Weibel}*{8.1.9, 8.1.6, and~8.2.1 on pages 258--260}.

(b)
There are also degeneracy maps~$\MOR{\sigma_n^i}{F_n}{F_{n+1}}$, defined by inserting identities in the obvious ways, making~$\cx{B}^u(M)$ into a simplicial $\gf\cat$-module.
\end{remark}

\begin{definition}[normalized bar resolution]
\label{normalized-bar}
Let $M$ be a $\gf\cat$-module. The \emph{degenerate} chain
complex $\cx{D}(M)$ is the subcomplex of $\cx{B}^u(M)$ generated by
the degeneracy maps $\sigma_n^i$ from
Remark~\ref{degeneracy-remarks}(b). Thus
in homological degree $n$ we have
\[
D_n(M)=\sum_i \sigma_{n-1}^i(F_{n-1}^u).
\]
We define the \emph{(normalized) bar complex}
$\cx{B}(M)$ as the quotient chain complex
\[
\cx{B}(M) =
\cx{B}^u(M)/\cx{D}(M).
\]
In particular, the component $F_n$ of $\cx{B}(M)$
in homological degree $n$ is the $\gf\cat$-module
\[
F_n(-)=
\bigoplus_{c_0\to\dotsb\to c_n}
\gf[\mor_\cat(c_n,-)]\tensor M(c_0)
\,,
\]
where the direct sum is indexed over all $n$-tuples of composable
non-identity morphisms of~$\cat$,
%\[
%c_0\TO[u_1] c_1\TO\dotsb\TO c_{n-1}\TO[u_n] c_n
%\,,
%\]
%with no $u_i$ the identity,
i.e., all non-degenerate $n$-simplices of the nerve of~$\cat$.
\end{definition}

\begin{proposition}
%We claim that
The complexes $\cx{B}^u(M)$ and $\cx{B}(M)$ are resolutions of~$M$.
\end{proposition}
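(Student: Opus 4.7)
The plan is to prove acyclicity of both augmented complexes pointwise, i.e., after evaluation at each object $d\in\obj\cat$.

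For $\cx{B}^u(M)\to M$, I would exhibit an ``extra degeneracy'' contracting homotopy at each $d\in\obj\cat$. Define $h_{-1}\colon M(d)\to F_0^u(d)$ by $m\mapsto\id_d\tensor m$ in the summand indexed by the $0$-simplex~$d$, and for $n\ge0$ define $h_n\colon F_n^u(d)\to F_{n+1}^u(d)$ on the summand indexed by $c_0\to\dotsb\to c_n$ by sending $v\tensor m$ (with $v\in\mor_\cat(c_n,d)$ and $m\in M(c_0)$) to $\id_d\tensor m$ in the summand of $F_{n+1}^u(d)$ indexed by the extended chain $c_0\to\dotsb\to c_n\TO[v]d$. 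A direct case analysis of the face-map definitions in~\autoref{bar} produces the interchange relations
\[
\partial_{n+1}^{n+1}\circ h_n=\id
\AND
\partial_{n+1}^{i}\circ h_n=h_{n-1}\circ\partial_n^i\ \text{for } 0\le i\le n,
\]
together with $\partial_1^0\circ h_0=h_{-1}\circ\varepsilon$. Setting $s_n=(-1)^{n+1}h_n$, these imply the chain-homotopy identities $d_{n+1}s_n+s_{n-1}d_n=\id$ for $n\ge1$ and $d_1 s_0+s_{-1}\varepsilon=\id$. Hence $\cx{B}^u(M)(d)\to M(d)$ is contractible, in particular exact, and since exactness of sequences of $\gf\cat$-modules is detected pointwise, $\cx{B}^u(M)\to M$ is a resolution.

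For $\cx{B}(M)\to M$, my plan is to deduce the statement from the classical normalization theorem for simplicial modules. The defining short exact sequence of augmented chain complexes of $\gf\cat$-modules
\[
0\to\cx{D}(M)\to\cx{B}^u(M)\to\cx{B}(M)\to 0
\]
yields a long exact sequence in homology. By Moore's normalization theorem (see, e.g.,~\cite{Weibel}*{\S8.3}) applied objectwise to the simplicial $\gf$-module $F_\bullet^u(d)$, the degenerate subcomplex $\cx{D}(M)(d)$ is acyclic for every $d$. Combined with the acyclicity of $\cx{B}^u(M)\to M$ proved above, this gives $\HH_n(\cx{B}(M))=0$ for $n>0$ and $\HH_0(\cx{B}(M))\cong M$, so $\cx{B}(M)\to M$ is a resolution as well.

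The main obstacle is the sign- and chain-bookkeeping in verifying the interchange relations of the first step: one has to separately handle the three cases in the definition of $\partial_n^i$ from~\autoref{bar}, keeping in mind that under $h_n$ the original attaching morphism $v\in\mor_\cat(c_n,d)$ becomes the new last chain morphism $u_{n+1}$, while the new attaching morphism is $\id_d$. Once those relations hold, the identity $d_{n+1}s_n+s_{n-1}d_n=\id$ follows by telescoping the alternating sum defining~$d_{n+1}$.
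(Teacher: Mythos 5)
Your proposal is correct and follows essentially the same route as the paper: you construct the same extra degeneracy (the paper's $\gamma_n$ is exactly your $h_n$, just written via adjunction), verify the same simplicial interchange identities, and invoke the normalization theorem to pass from $\cx{B}^u(M)$ to $\cx{B}(M)$. The only cosmetic difference is that you spell out the sign bookkeeping for the chain homotopy and use a long exact sequence for the normalization step, whereas the paper outsources both to Weibel's ``right contractible'' lemma~8.4.6.1 and to~8.3.6/8.3.8; one small omission to note is that you should also record $\varepsilon h_{-1}=\id$ (immediate from the definitions), which is needed for exactness at~$M$.
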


\begin{proof}
By \cite{Weibel}*{8.3.6, 8.3.8 on pages 265--266} it is enough to
show that $\cx{B}^u(M)$ is a resolution of $M$.
In order to see this, notice that we can define extra maps
\[
\ANDalign{\MOR{\gamma_{-1}}{M(c)}{F_0(c)}}{\MOR{\gamma_n}{F_n(c)}{F_{n+1}(c)}}
\]
for each~$c\in\obj\cat$ and $n\ge0$ as follows.
The map~$\gamma_{-1}$ is induced by the map
\[
M(c)\TO\gf[\mor_\cat(c,c)]\tensor M(c)
\,,\qquad
m\longmapsto \id_c\tensor m
\,.
\]
To define~$\gamma_n$, given any
\(
c_0\TO\dotsb\TO c_n
\)
we need to construct a map
\[
\gf[\mor_\cat(c_n,c)]\tensor M(c_0)
\TO
F_{n+1}(c)
=
\bigoplus_{d_0\to\dotsb\to d_{n+1}}
\gf[\mor_\cat(d_{n+1},c)]\tensor M(d_0)
\,,
\]
or, equivalently, a function
\[
\mor_\cat(c_n,c)
\TO
\hom_\gf\!\!\left(
M(c_0),
\bigoplus_{\;d_0\to\dotsb\to d_{n+1}}
\gf[\mor_\cat(d_{n+1},c)]\tensor M(d_0)
\right)
\!.
\]
To define this latter function, we send $\MOR{u}{c_n}{c}$ to the homomorphism
\[
M(c_0)\TO\gf[\mor_\cat(c,c)]\tensor M(c_0)
\,,\qquad
m\longmapsto \id_c\tensor m
\]
in the summand indexed by
\(
c_0\TO\dotsb\TO c_n\TO[\smash{u}]{c}\,.
\)
It is easy to check that $\varepsilon\gamma_{-1}=\id$, $\partial_1^0\gamma_0=\gamma_{-1}\varepsilon$, and, for all $n>0$ and all~$0\le i\le n$, $\partial_n^n\gamma_{n-1}=\id$ and $\partial_{n+1}^i\gamma_n=\gamma_{n-1}\partial_n^i$.

Therefore the augmented simplicial $\gf$-module $\cx{B}(M)(c)\TO M(c)$ is (right) contractible in the sense of~\cite{Weibel}*{page~275}, and so the associated augmented chain complex is a resolution~\cite{Weibel}*{8.4.6.1 on page~275}.
\end{proof}

\begin{remark}
%We remark that
(a) The maps~$\gamma_n$ are not natural in~$c$, i.e., they do not define homomorphisms of~$\gf\cat$-modules.

(b) Observe that we can view
\[
F_n(-)=
\adjustlimits
\bigoplus_{c_n\in\obj\cat}
\bigoplus_{\;c_0\to\dotsb\to c_n}
\gf[\mor_\cat(c_n,-)]\tensor M(c_0)
\,,
\]
where the first direct sum is indexed over all objects~$c_n$ of~$\cat$, and the second one is indexed over all $n$-tuples of composable non-identity morphisms of~$\cat$ that end in~$c_n$.
This shows that, if $M$ is object-wise free, i.e., if the $\gf$-module~$M(c)$ is free over~$\gf$ for each $c\in\obj\cat$, then each~$F_n$ is a free $\gf\cat$-module, and so $\cx{B}(M)$ is a free resolution of~$M$.
Obviously, this condition on~$M$ is automatically satisfied when $\gf$ is a field.
\end{remark}

%%%%%%%%%%%%%%%%%%%%%%%%%%%%%%%%%%%%%%%%%%%%%%%%%%%%%%%%%%%%%%%%%%%%%%%%%%%%%%

\section{Tensor products, induction and restriction}
\label{IND-RES}

Now we recall the definition of tensor products of modules over categories, and then use them to study induction and restriction along functors.

Given a $\gf\cat^\op$-module~$N$ and a $\gf\cat$-module~$M$, their \emph{tensor product over~$\gf\cat$} is the $\gf$-module $N\tensor_{\gf\cat}M$ defined as the quotient
\[
\bigoplus_{c\in\obj\cat}N(c)\tensor M(c)
\Bigm/
\bigl\langle\,
nu\tensor m - n\tensor um \bigm| n\in N(d), m\in M(c), u\in\mor_\cat(c,d)
\,\bigr\rangle
\,,
\]
where, to underscore the analogy with the tensor product of modules over rings, we use the shorthands $um$ and $nu$ for the values of the homomorphisms $M(u)$ and $N(u)$ at the elements $m$ and $n$ respectively; i.e., $um=M(u)(m)$ and $nu=N(u)(n)$.
More conceptually, $N\tensor_{\gf\cat}M$ is the coend of the functor
\[
\cat^\op\times\cat\TO\modules{\gf}
\,,\qquad
(d,c)\longmapsto N(d)\tensor M(c)
\,.
\]

Notice that if $\catwo$ is another small category, $N$ is a $\gf(\cat^\op\times\catwo)$-module, and $M$ is a $\gf\cat$-module, then $N\tensor_{\gf\cat}M$ becomes a $\gf\catwo$-module.
We think of and refer to $\gf(\cat^\op\times\catwo)$-modules as \emph{$\gf\catwo$-$\gf\cat$-bimodules}.
Dually, given a $\gf\catwo$-module~$L$, we get a $\gf\cat$-module $\hom_{\gf\catwo}(N,L)$ of $\gf\catwo$-homomorphisms; notice that $\hom_{\gf\catwo}(N,L)$ is covariant in~$\cat$ because $N$ is contravariant in~$\cat$.
It is easy to see that the functors
\[
\MOR{N\tensor_{\gf\cat}-}{\modules{\gf\cat}}{\modules{\gf\catwo}}
\AND
\MOR{\hom_{\gf\catwo}(N,-)}{\modules{\gf\catwo}}{\modules{\gf\cat}}
\]
are adjoint, i.e., for all $\gf\cat$-modules~$M$ and all $\gf\catwo$-modules~$L$, there are natural isomorphisms
\[
\hom_{\gf\catwo}\bigl(N\tensor_{\gf\cat}M,L\bigr)
\cong
\hom_{\gf\cat}\bigl(M,\hom_{\gf\catwo}(N,L)\bigr)
\,.
\]
It follows that that $N\tensor_{\gf\cat}-$ is right exact and $\hom_{\gf\catwo}(N,-)$ is left exact.

% \begin{remark}[two-sided bar construction]
% Generalizing the bar resolution of \autoref{bar},
% given a $\gf\cat^\op$-module~$N$ and a $\gf\cat$-module~$M$, define for each $n\ge0$
% \[
% B_n(N,\cat,M)=
% \bigoplus_{c_0\to\dotsb\to c_n}
% N(c_n)\tensor M(c_0)
% \,,
% \]
% and maps $\partial_n^i$ and $\MOR{d_n}{B_n(N,\cat,M)}{B_{n-1}(N,\cat,M)}$ using the same formulas as in \autoref{bar}.
% With this notation, for each $c\in\obj\cat$ the chain
% complex~$\cx{B}(M)(c)$ from \autoref{bar} is equal to $\cx{B}(\gf[\mor_\cat(-,c)],\cat,M)$.

% Notice that $N\tensor_{\gf\cat}M$ is by definition the cokernel of~$d_1$, i.e., there is a natural isomorphism
% $
% %\[
% H_0\bigl(\cx{B}(N,\cat,M)\bigr)\cong N\tensor_{\gf\cat}M
% \,.
% %\]
% $
% \end{remark}

\begin{definition}
Given a functor  $\MOR{\alpha}{\cat}{\catwo}$, we
define the \emph{tautological} $\gf\catwo$-$\gf\cat$-bimodule~$\gf\catwo_\alpha$ as follows:
\[
\MOR{\gf\catwo_\alpha}{\cat^\op\times\catwo}{\modules{\gf}}
\,,\qquad
(c,d)\longmapsto\gf\bigl[\mor_\catwo(\alpha(c),d)\bigr]
\,.
\]
When $\alpha$ is the identity functor $\MOR{\id_\cat}{\cat}{\cat}$ we omit
the subscript and write simply $\gf\cat$ for the tautological bimodule $\gf\cat_{\id_\cat}$.
\end{definition}

Precomposition with $\MOR{\alpha}{\cat}{\catwo}$ defines a functor
\[
\MOR{\res_\alpha}{\modules{\gf\catwo}}{\modules{\gf\cat}}
\,,\qquad
L\longmapsto L\circ\alpha
\,,
\]
which we call \emph{restriction along~$\alpha$}, and which is obviously exact.
Notice that, for each $\gf\catwo$-module~$L$ and each $c\in\obj\cat$, we have
\[
\hom_{\gf\catwo}\bigl(\gf\catwo_\alpha,L\bigr)(c)
=
\hom_{\gf\catwo}\bigl(\gf\bigl[\mor_\catwo(\alpha(c),-)\bigr],L(-)\bigr)
\cong
L(\alpha(c))
\,,
\]
by Yoneda's Lemma, and so
\[
\res_\alpha \cong \hom_{\gf\catwo}\bigl(\gf\catwo_\alpha,-\bigr)
\,.
\]
Therefore the functor
\[
\MOR{\ind_\alpha=\gf\catwo_\alpha\tensor_{\gf\cat}-}{\modules{\gf\cat}}{\modules{\gf\catwo}}
\,,
\]
which we call \emph{induction along~$\alpha$}, is left adjoint to~$\res_\alpha$, and hence $\ind_\alpha$ is right exact.
More conceptually, $\ind_\alpha$ is the left Kan extension along~$\alpha$.
Being left adjoint to an exact functor, induction preserves projective modules.
Induction also preserves free modules: for any fixed $c_0\in\obj\cat$, and for each~$d\in\obj\catwo$, we have
\[
\ind_\alpha\bigl(\gf[\mor_\cat(c_0,-)]\bigr)(d)
=
\gf[\mor_\catwo(\alpha(-),d)]\!\tensor_{\gf\cat}\!\gf[\mor_\cat(c_0,-)]
\cong
\gf[\mor_\catwo(\alpha(c_0),d)]
\,,
\]
where the last isomorphism is a consequence of Yoneda's Lemma.
Since induction is right exact, induction also preserves finitely generated modules.
On the other hand, restriction does not preserve free nor projective nor finitely generated modules in general.

% \begin{example}
% Let $\cat'$ be a full subcategory of $\cat$, and let
% $\MOR{\varrho}{\cat'}{\cat}$ be the inclusion functor. Let
% $F(-)=\gf[\mor_{\cat'}(c',-)]$ be a free $\gf\cat'$-module on one
% generator. Then $\ind_\varrho F\cong \gf[\mor_\cat(c',-)]$ is the
% free $\gf\cat$-module on the same generator (now considered as
% an $\obj\cat$-set).
% \end{example}

\begin{remark}
An equivalence of categories $\MOR{\alpha}{\CC}{\CD}$ induces an equivalence of module categories $\MOR{\res_\alpha}{\modules{\gf\catwo}}{\modules{\gf\cat}}$.
More explicitly, if $\MOR{\beta}{\catwo}{\cat}$ is a functor such that the compositions $\beta\alpha$ and $\alpha\beta$ are naturally isomorphic to the respective identities, then the same is true for $\res_\beta\res_\alpha=\res_{\beta\alpha}$ and $\res_\alpha\res_\beta=\res_{\alpha\beta}$.
Moreover, the uniqueness of adjoints implies that there are natural isomorphisms $\ind_\alpha\cong\res_\beta$ and $\ind_\beta\cong\res_\alpha$.
\end{remark}

%%%%%%%%%%%%%%%%%%%%%%%%%%%%%%%%%%%%%%%%%%%%%%%%%%%%%%%%%%%%%%%%%%%%%%%%%%%%%%

\section{EI~categories, splitting functor, and supports}
\label{EI}

An \emph{EI~category} is a category in which all \emph{E}ndomorphisms are \emph{I}somorphisms.
Notice that, in any EI~category, if there exists an isomorphism $c\TO d$, then every morphism $c\TO d$ is an isomorphism.

Given a small category~$\cat$, we denote by~$\cla$ the set of isomorphism classes of objects of~$\cat$, and by~$\MOR{\cl}{\obj\cat}{\cla}$ the function that sends each object to its isomorphism class.
Define a preorder on~$\cla$ be setting $\cl(c)\le\cl(d)$ if and only if there exists a morphism $c\TO d$.
If $\cat$ is an EI~category, then the relation~$\leq$ is also antisymmetric, and so $(\,\cla,\,\leq\,)$ is a poset; in that case
$\cl(c)<\cl(d)$ if and only if there exists a non-isomorphism $c\TO d$.
When $\cat$ is EI we write $\cat_{\le c}$ for the full subcategory of $\cat$ with objects all $d$ such that $\cl(d)\le\cl(c)$; the category $\cat_{<c}$
is defined analogously.

We recall now some standard notions from order theory.
Let $(\CP,\leq)$ be a preorder, %poset,
e.g., the set of isomorphism classes of objects in a small %EI~
category.
A subset $A\subseteq\CP$ is called \emph{Artinian} if every descending chain in~$A$ stabilizes.
Equivalently, $A$ is Artinian if and only if every non-empty subset of~$A$ has a minimal element.
Obviously, the property of being Artinian passes to subsets, and it is
straightforward to see that an Artinian subset is a poset.
A subset $A\subseteq\CP$ is called an \emph{upper set} if $a\in A$ and $a\leq b$ imply $b\in A$.
The smallest upper set containing a subset~$S\subseteq\CP$ is denoted~$\upset{S}$ and called the \emph{upper set generated by~$S$}.
We say that an upper set~$A\subseteq\CP$ is \emph{finitely generated} if there exists a finite set~$S\subseteq A$ such that $\upset{S}=A$.

We denote by~$\iso$ the subcategory of~$\cat$ with the same objects but with only the isomorphisms of~$\cat$ as morphisms.
In other words, $\iso$ is the maximal groupoid inside~$\cat$. We write
$\iota$ for the inclusion functor
\(
\MOR{\iota}{\iso}{\cat}.
\)
Given an object $c\in\obj\cat$, let $\aut_\cat(c)$ denote the group of automorphisms of~$c$ in~$\cat$, and let $\gf[c]=\gf[\aut_\cat(c)]$ be the corresponding group algebra.
If $\cat$ is an EI~category, then
\(
\aut_\cat(c)=\mor_{\cat}(c,c)=\mor_{\iso}(c,c)
\).
We remark that, by choosing a representative for each isomorphism class of objects in~$\cat$, we obtain an equivalence of categories
\[
\modules{\gf\iso}
\simeq
\prod_{\cl(c)\in\cla}\modules{\gf[c]}
\,.
\]

\begin{definition}
We denote by
\[
\MOR{\ind}{\modules{\gf\iso}}{\modules{\gf\cat}}
\AND
\MOR{\res}{\modules{\gf\cat}}{\modules{\gf\iso}}
\]
the adjoint induction and restriction functors along the inclusion $\MOR{\iota}{\iso}{\cat}$.
\end{definition}

\begin{remark}\label{ind-on-isoC}
(a)
Let $Q$ be a $\gf\iso$-module. For each $c,e\in\obj\cat$ the group
$\aut_\cat(c)$ acts on $\gf[\mor_\cat(c,e)]\otimes Q(c)$, where
$u\in\aut_\cat(c)$ sends $v\otimes m$ to $vu^{-1}\otimes Q(u)m$. %Now
A~routine computation using the
definitions shows that %the $\gf\cat$-module $\ind Q$ satisfies
\[
\ind Q (e) \cong
\bigoplus_{\cl(c)\in\cla}
\Bigl(\gf[\mor_\cat(c,e)]\otimes Q(c)\Bigr)\big/\aut_\cat(c),
\]
and for $w\in\mor_\cat(e,f)$ the morphism
$\MOR{\ind Q(w)}{\ind Q(e)}{\ind Q(f)}$
is induced by composition with $w$. Furthermore, the
unit of adjunction
\[
\MOR{\eta_Q(e)}{Q(e)}{\res\ind Q(e)}
\]
corresponds
to the inclusion
$
Q(e)\TO \id_e\otimes Q(e)\subseteq
\Bigl(\gf[\mor_\cat(e,e)]\otimes Q(e)\Bigr)\big/\aut_\cat(e).
$

(b)
Let $M$ be a $\gf\cat$-module. The counit of the adjunction
\[
\MOR{\varepsilon_M(e)}{\ind\res M(e)}{M(e)}
\]
is the morphism
\[
\bigoplus_{\cl(c)\in\cla}
\Bigl(\gf[\mor_\cat(c,e)]\otimes M(c)\Bigr)\big/\aut_\cat(c) \quad
\TO
\quad M(e)
\]
induced by  the structure maps \eqref{eq:adj} of $M$.
\end{remark}

\begin{definition}[splitting functor]
Given a $\gf\cat$-module~$M$, for each $c\in\obj\cat$ let
\[
\jrad M(c)
=
\image
\left(\
%{\bigl(
%\Sigma\ M(u)
%\bigr)%_u
%}
%\colon
{\bigoplus_{\substack{b\TO[u]c\\\text{non-iso}}}M(b)}
\xrightarrow{\quad\sum M(u)\quad}
{M(c)}
\
\right)
,
\]
and define $\spl M(c)= M(c)/\jrad M(c)$.
Given any isomorphism $c\TO[v]d$, the composition $b\TO[u]c\TO[v]d$ is an isomorphism if and only if $u$ is an isomorphism.
Therefore we get two functors $\MOR{\jrad M, \spl M}{\iso}{\modules{\gf}}$, hence also
%In particular, this yields
a functor
\[
\MOR{\jrad}{\modules{\gf\cat}}{\modules{\gf\iso}}
\,,
\]
which is a category-theoretic analogue of the
ring-theoretic notion of Jacobson radical,  and
a functor
\[
\MOR{\spl}{\modules{\gf\cat}}{\modules{\gf\iso}}
\,,
\]
which is called the \emph{splitting functor}.
\end{definition}

\begin{remark}\label{ind-spl}
%Notice that,
(a)
It is straightforward from the definition that both
$\jrad$ and $\spl$ preserve epimorphisms and direct sums.

(b)
By definition, for each $\gf\cat$-module~$M$
there is a natural short exact sequence
\[
0 \TO \jrad M \TO[i_M] \res M \TO[p_M] \spl M\TO 0
\]
of $\gf\iso$-modules.  %for each $\gf\cat$-module~$M$.
Since $\res$ is exact and $\jrad$ and $\spl$ preserve
epimorphisms, a routine diagram chase shows that
$\spl$ is a right exact functor.

(c) Let $Q$ be a $\gf\iso$-module. A direct consequence from the
observations in \autoref{ind-on-isoC} and the definitions is  that
\[
\spl\ind Q(e)=
\begin{cases}
0    &\text{ if $\id_e=uw$ for some non-isomorphisms $u,w\in\mor_\cat(e,e)$}; \\
Q(e) &\text{ otherwise. }
\end{cases}
\]
In particular $\spl\ind Q$ is a quotient of $Q$, and it is immediate
that the composition
\[
Q \TO[\eta_Q]  \res\ind Q \xrightarrow{p_{\ind Q}} \spl\ind Q
\]
is precisely the natural quotient epimorphism $\MOR{\tau_Q}{Q}{\spl\ind Q}$.

(d) Let $M$ be a $\gf\cat$-module. Then $\spl M(e)=0$ whenever
$\id_e=uw$ for some non-isomorphisms $u,w\in\mor_\cat(e,e)$.
Furthermore, we have
\[
\spl(\varepsilon_M)\circ\tau_{\res M} = p_M.
\]
Indeed, due to the naturality of the morphism $p$ and the basic properties
of the unit and counit of adjunction, we have
$
\spl(\varepsilon_M)\circ\tau_{\res M} =
\spl(\varepsilon_M)\circ p_{\ind\res M}\circ \eta_{\res M} =
p_M\circ \res(\varepsilon_M)\circ \eta_{\res M} =
p_M.
$
\end{remark}

\begin{remark}
Let $\cat$ be a small EI category.

(a)
We have an equivalent description of the splitting functor as follows.
Define a $\gf\iso$-$\gf\cat$-bimodule~$B$ by
\begin{align*}
\MOR{B}{\cat^\op\times\iso&}{\modules{\gf}}
\,,
\\
(c,d)&\longmapsto \gf\bigl[\mor_{\iso}(c,d)\bigr].
\end{align*}
Since
\[
\gf\bigl[\mor_{\iso}(c,d)\bigr] =
\begin{cases}
\gf\bigl[\mor_{\cat}(c,d)\bigr] &\text{if $c\cong d$\,,}
\\
0&\text{if $c\not\cong d$\,,}
\end{cases}
\]
we obtain a natural isomorphism
\[
%\(
\spl M\cong B\tensor_{\gf\cat}M
\,.
%\)
\]
From this description we see that when $\cat$ is EI the functor
%the functor $\spl$ is right exact. Moreover,
$\spl$ sends free, projective, and finitely generated $\gf\cat$-modules to $\gf\iso$-modules with the same properties.

(b)
%\begin{remark}
%Finally, notice that,
For each $\gf\iso$-module~$Q$,
%(a)
%Suppose $\cat$ is an EI category. Then
the natural epimorphism %of functors
$\MOR{\tau_Q}{Q}{\spl\ind Q}$ from Remark~\ref{ind-spl}(c)
%where for a $\gf\iso$-module~$Q$
%the map $\MOR{\tau_Q}{Q}{\spl\ind Q}$
is given by
the composition
\begin{equation}
\label{eq:spl-ind}
Q
\cong
\gf\iso\tensor_{\gf\iso}Q
\cong
%B\tensor_{\gf\cat}\Bigl(\gf\cat_\alpha\tensor_{\gf\iso}Q\Bigr)
%\cong
\Bigl(B\tensor_{\gf\cat}\gf\cat_\iota\Bigr)\tensor_{\gf\iso}Q
\cong
B\tensor_{\gf\cat}\Bigl(\gf\cat_\iota\tensor_{\gf\iso}Q\Bigr)
\cong
%\gf\iso\tensor_{\gf\iso}Q
%\cong
\spl\ind Q
\,,
\end{equation}
%It is now straightforward from the definitions to check
%for any $\gf\cat$-module $M$
%that the counit of the adjunction $\MOR{\varepsilon_M}{\ind\res M}{M}$
%and any $\gf\cat$-module $M$
%satisfies  $(\spl\varepsilon_M)(\tau_{\res M})=p_M$.
in particular it is an isomorphism.
%Indeed since both $\ind$ and $\spl$ preserve
%free modules and direct sums, this is true for
\end{remark}

We conclude this section by introducing several notions of support that
we will need later.

\begin{definition}[support]
The \emph{support} of an $\obj\cat$-set $B$ is the following subset
of $\cla$:
\[
\supp(B)=\SET{\cl(c)\in\cla}{B(c)\neq\emptyset}.
\]
The \emph{support} and the \emph{minimal support} of a $\gf\cat$-module~$M$ are the following subsets of~$\cla$:
\begin{align*}
\supp(M)
&=
\SET{\cl(c)\in\cla}{M(c)\neq0}
\,,
\\
\minsupp(M)
&=
\SET{\cl(c)\in\cla}{\spl M(c)\neq0}
\,.
\shortintertext{If $\cx{C}$ is a chain complex of $\gf\cat$-modules, we let}
\minsupp(\cx{C})
&=
\bigcup_{n\in\IZ}\minsupp(C_n)
\,.
\end{align*}
\end{definition}

\begin{remark}
Let $M$ be a $\gf\cat$-module.

(a)
We have $\minsupp(M)\subseteq\supp(M)$, and
$\cl(e)\notin\minsupp(M)$ whenever $\id_e=uw$ for some non-isomorphisms $u,w\in\mor_\cat(e,e)$.

(b)
%Notice that,
If \ $0\TO L\TO M\TO N\TO 0$ \ is a short exact %,
sequence of $\gf\cat$-modules,
then
$
\supp(L)\cup\supp(N)=\supp(M),
$
and,
%\AND
since $\spl$ is right exact,
\[
\minsupp(N) \ \subseteq \
\minsupp(M) \ \subseteq \
\minsupp(N)\cup\minsupp(L).
\]
%\,,
%since $\spl$ is right exact,
However, in general $\minsupp(L)\not\subseteq\minsupp(M)$.

(c)
If $\cat$ is EI, then
any minimal element of~$\supp(M)$ is contained in~$\minsupp(M)$
(compare the proof of \autoref{Nak}).
If in addition $\supp(M)$ is Artinian,
%and $M$ is a $\gf\cat$-module.
then we also have
%any minimal element of~$\supp(M)$ is contained in~$\minsupp(M)$
%(compare the proof of \autoref{Nak}), and we have %that
$
\upset{\minsupp(M)}=\upset{\supp(M)}.
$
\end{remark}

\begin{example}
\label{minsupp-free}
If $F$ is a free $\gf\cat$-module with basis~$B$, then
$\minsupp(F)\subseteq \supp(B)$ and $\supp(F)=\upset{\supp(B)}$.
If $\cat$ is EI, then also $\minsupp(F)=\supp(B)$
%\[
%\minsupp(F)=\SET{\cl(c)\in\cla}{B(c)\neq\emptyset}
%\]
and $\supp(F)=\upset{\minsupp(F)}$.
\end{example}

%%%%%%%%%%%%%%%%%%%%%%%%%%%%%%%%%%%%%%%%%%%%%%%%%%%%%%%%%%%%%%%%%%%%%%%%%%%%%%

\section{Minimal projective resolutions}

We begin by recalling the definitions of projective covers and minimal projective resolutions in an arbitrary abelian category~$\CA$.
An epimorphism $\MOR{f}{L}{M}$ is called \emph{superfluous} if, for any morphism $\MOR{g}{K}{L}$, we have that $g$ is epic if and only if $fg$ is epic.
A \emph{projective cover} of an object~$M$ of~$\CA$ is a projective object~$P$ together with a superfluous epimorphism $\MOR{f}{P}{M}$.
A \emph{minimal projective resolution of~$M$} is a resolution
\[
\MOR{\varepsilon}{\cx{P}}{M}
\quad=\quad
\dotsb\TO P_n\TO[d_{n}]P_{n-1}\TO\dotsb\TO[d_1]P_0\TO[\varepsilon]M
\]
such that $\MOR{\varepsilon}{P_0}{M}$ and $\MOR{d_n}{P_n}{\im(d_n)=\ker(d_{n-1})}$ are projective covers for each~$n\ge1$.

It is well known and easy to see that the definitions imply that projective covers and minimal projective resolutions are unique up to isomorphism, if they exist.
Moreover, if $\MOR{f}{P}{M}$ is a projective cover and $\MOR{g}{Q}{M}$ is an epimorphism with $Q$ projective, then $Q\cong P\oplus P'$ for some $P'\leq\ker{g}$;
if $\MOR{\varepsilon}{\cx{P}}{M}$ is a minimal projective resolution and $\MOR{\delta}{\cx{Q}}{M}$ is any other projective resolution, then $\cx{Q}\cong\cx{P}\oplus\cx{P'}$ for some contractible subcomplex $\cx{P'}\leq\cx{Q}$.

An abelian category~$\CA$ is called \emph{perfect} if every object has a projective cover (and therefore also a minimal projective resolution).
Now suppose that $\CA=\modules{\gf\cat}$, so that we have a notion of finitely generated objects.
Then $\CA$ is called \emph{semi-perfect} if every finitely generated object has a projective cover.
We say that $\CA$ is \emph{Noetherian} if every subobject of a finitely generated object is again finitely generated.
Notice that, if $\CA$ is semi-perfect and Noetherian, then every finitely generated object has a minimal projective resolution.
In the special case where $\CA=\modules{\gf}$, then $\CA$ is perfect, semi-perfect, or Noetherian if and only if the ring~$\gf$ has the same property.

\begin{example}
If $\cat$ is a small category, then the category
\[
\modules{\gf\iso}
\simeq
\prod_{\cl(c)\in\cla}\modules{\gf[c]}
\]
is (semi-)perfect if and only if, for each object $c$ of~$\cat$, the group ring $\gf[c]$ is (semi\nobreakdash-)perfect.
For example, if $\gf$ is a field and all automorphism groups in~$\cat$ are finite, then $\modules{\gf\iso}$ is semi-perfect.
\end{example}

\begin{Nakayama'slemma}
\label{Nak}
Let $\cat$ be a small EI category, and let $M$ be a\/ $\gf\cat$-module.
If\/ $\supp(M)$ has a minimal element, then\/ $\spl M\neq0$.
\end{Nakayama'slemma}

\begin{proof}
This is obvious: if $\cl(c)\in\supp(M)$ is minimal, then there are no non-isomorphisms $b\TO c$ with $M(b)\neq0$, and so by definition $\spl M(c)=M(c)\neq0$.
\end{proof}

\begin{corollary}
\label{cor-Nak}
Let $\cat$ be a small EI category, and let $\MOR{f}{L}{M}$ be a homomorphism of\/ $\gf\cat$-modules.
\begin{enumerate}
\item
\label{i:epi}
Assume that\/ $\supp(M)$ is Artinian.
If\/ $\spl f$ is an epimorphism, then so is~$f$.
\item
\label{i:sup-epi}
Assume that $\supp(L)$ is Artinian and that $f$ is an epimorphism.
If $\spl f$ is a superfluous epimorphism, then so is~$f$.
\end{enumerate}
\end{corollary}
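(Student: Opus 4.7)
For part~\eqref{i:epi}, the plan is to study the cokernel $N=\coker f$ and apply Nakayama's Lemma (\autoref{Nak}) to it. We have an exact sequence $L\TO[f]M\TO N\TO 0$. Since $\spl$ is right exact (\autoref{ind-spl}(b)), the sequence $\spl L\TO[\spl f]\spl M\TO\spl N\TO0$ is exact, so the hypothesis that $\spl f$ is epic forces $\spl N=0$. On the other hand $\supp(N)\subseteq\supp(M)$ is Artinian, so if $N$ were nonzero then $\supp(N)$ would contain a minimal element, and \autoref{Nak} would give $\spl N\neq 0$. This contradiction shows $N=0$, i.e., $f$ is epic.

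For part~\eqref{i:sup-epi}, the plan is to reduce to part~\eqref{i:epi} via functoriality of $\spl$. We must show that for every morphism $\MOR{g}{K}{L}$, the composition $fg$ is epic if and only if $g$ is. One direction is immediate since the composition of epimorphisms is epic. For the converse, suppose $fg$ is epic. Then $\spl(fg)=\spl f\circ\spl g$ is epic, because $\spl$ preserves epimorphisms (\autoref{ind-spl}(a)). The assumption that $\spl f$ is a superfluous epimorphism then forces $\spl g$ to be epic. Since by hypothesis $\supp(L)$ is Artinian, part~\eqref{i:epi} applied to~$g$ yields that $g$ is epic, as desired.

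The only subtlety I anticipate is making sure that the Artinian hypotheses are used in the correct places: in~\eqref{i:epi} we need $\supp(\coker f)\subseteq\supp(M)$ to be Artinian so that Nakayama applies to the cokernel, which is why the hypothesis is placed on $\supp(M)$; in~\eqref{i:sup-epi} we instead need to run part~\eqref{i:epi} with $g$ in place of $f$ and $L$ in place of $M$, which is exactly why the Artinian hypothesis is moved to $\supp(L)$. Everything else reduces to bookkeeping with the formal properties of $\spl$ already recorded in \autoref{ind-spl}, so I do not expect any serious obstacle beyond keeping the roles of the modules straight.
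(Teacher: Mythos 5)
Your proof is correct and follows essentially the same route as the paper's: reducing part~\eqref{i:epi} to \autoref{Nak} via the cokernel and right exactness of~$\spl$, and then reducing part~\eqref{i:sup-epi} to part~\eqref{i:epi} applied to the auxiliary morphism~$g$. The bookkeeping with the Artinian hypotheses is handled exactly as in the paper.
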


\begin{proof}
\eqref{i:epi}
Consider $\coker f$, and notice that $\supp(\coker f)\subseteq\supp(M)$.
If $f$ is not epic, i.e., if $\supp(\coker f)$ is not empty, then it has a minimal element, and \autoref{Nak} implies that $\spl\coker f\ne0$.
Since $\spl$ is right exact, we have that $\spl\coker f\cong\coker\spl f$ and can conclude that $\spl f$ is not epic.

\eqref{i:sup-epi}
%Since $\supp(M)\subseteq\supp(L)$, we see that $f$ is epic using
%part~\eqref{i:epi}.
Given a homomorphism $\MOR{g}{K}{L}$ such that $fg$ is epic, we get that $\spl(fg)=\spl f\spl g$ is epic, since $\spl$ is right exact, and therefore $\spl g$ is epic since $\spl f$ is superfluous.
Now apply part~\eqref{i:epi} again to conclude that $g$ is epic.
\end{proof}

\begin{lemma}
\label{proj-cov}
Let $\cat$ be a small EI category, and let $M$ be a $\gf\cat$-module.
\begin{enumerate}
\item
\label{i:free-epi}
Assume that $\supp(M)$ is Artinian.
Then $M$ is a quotient of a free $\gf\cat$-module~$P$ with $\minsupp(P)=\minsupp(M)$.
\item
\label{i:proj-cov}
Assume that the upper set generated by~$\supp(M)$ in~$\cla$ is Artinian, and assume that $\spl M$ has a projective cover in $\modules{\gf\iso}$.
Then $M$ has projective cover $P\TO M$ with $\minsupp(P)=\minsupp(M)$.
If in addition projective $\gf\iso$-modules are free, then the projective
cover $P$ of $M$ is free.
\end{enumerate}
\end{lemma}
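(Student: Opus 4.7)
The plan is to construct $P$ in both parts by inducing along the inclusion $\MOR{\iota}{\iso}{\cat}$ from a suitable $\gf\iso$-module that covers $\spl M$, then exploit the compatibility between the splitting functor and induction recorded in Remark~\ref{ind-spl}(d). First I would pick an epimorphism of $\gf\iso$-modules $\MOR{\pi}{Q}{\spl M}$ with $\supp(Q)=\minsupp(M)$: for part~\eqref{i:free-epi} take $Q$ to be the free $\gf\iso$-module on the $(\obj\cat)$-set whose value at $c$ is $\spl M(c)$ when $\cl(c)\in\minsupp(M)$ and $\emptyset$ otherwise, with $\pi$ the canonical evaluation; for part~\eqref{i:proj-cov} take $\pi$ to be the projective cover of $\spl M$, which exists by hypothesis and automatically satisfies $\supp(Q)=\supp(\spl M)$ because, under the product decomposition $\modules{\gf\iso}\simeq\prod_{\cl(c)\in\cla}\modules{\gf[c]}$, cover maps are componentwise projective covers of $\gf[c]$-modules. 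Since $Q$ is projective and $\MOR{p_M}{\res M}{\spl M}$ is epic, I can lift $\pi$ through $p_M$ to obtain $\MOR{g}{Q}{\res M}$ with $p_M\circ g=\pi$, and then set $P=\ind Q$ and take $\MOR{f}{P}{M}$ to be the adjoint of~$g$, namely $f=\varepsilon_M\circ\ind(g)$.

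The desired properties of $P$ will follow from the general formalism of Sections~\ref{FREE}--\ref{EI}. Induction preserves free and projective modules, so $P$ is free in~\eqref{i:free-epi} and projective in~\eqref{i:proj-cov}, and it is free also in~\eqref{i:proj-cov} under the extra assumption that projective $\gf\iso$-modules are free. The natural isomorphism $\spl\ind Q\cong Q$ of~\eqref{eq:spl-ind}, together with \autoref{minsupp-free}, gives $\minsupp(P)=\supp(Q)=\minsupp(M)$. To see that $f$ is epic in both parts I would invoke \autoref{cor-Nak}\eqref{i:epi}---whose hypothesis is satisfied because $\supp(M)$ is Artinian in~\eqref{i:free-epi} by assumption and in~\eqref{i:proj-cov} because it is contained in the Artinian set $\upset{\supp(M)}$---reducing the problem to showing that $\spl(f)$ is epic. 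The naturality of $\tau$ combined with Remark~\ref{ind-spl}(d) gives the identity
\[
\spl(f)\circ\tau_Q=\spl(\varepsilon_M)\circ\spl\ind(g)\circ\tau_Q=\spl(\varepsilon_M)\circ\tau_{\res M}\circ g=p_M\circ g=\pi,
\]
so $\spl(f)$ is identified with~$\pi$ via the isomorphism~$\tau_Q$ and is therefore epic. For part~\eqref{i:proj-cov} this same identification further shows that $\spl(f)$ is a superfluous epimorphism, since $\pi$ is by construction a projective cover; I can then conclude that $\MOR{f}{P}{M}$ is a projective cover by invoking \autoref{cor-Nak}\eqref{i:sup-epi}, whose remaining hypothesis that $\supp(P)$ be Artinian holds because $\supp(\ind Q)=\upset{\supp(Q)}\subseteq\upset{\supp(M)}$.

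The conceptual heart of the argument is the identity $\spl(f)\circ\tau_Q=\pi$: it is precisely what transports the epimorphism and cover properties from the groupoid level $\modules{\gf\iso}$, where they are easy to arrange, to the category level $\modules{\gf\cat}$, where they are required. Everything else is bookkeeping about supports and a direct appeal to \autoref{cor-Nak} together with the preservation properties of induction and the splitting functor established earlier.
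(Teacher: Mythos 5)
Your proof is correct and follows essentially the same route as the paper's: you lift the given map $Q\to\spl M$ through $p_M$ to $\res M$, induce up along $\iota$, compose with the counit $\varepsilon_M$, and then use the identity $\spl(f)\circ\tau_Q=\pi$ (via naturality of $\tau$ and Remark~\ref{ind-spl}(d)) together with \autoref{cor-Nak} to transfer the epimorphism and superfluous-epimorphism properties from $\modules{\gf\iso}$ to $\modules{\gf\cat}$. You are slightly more explicit than the paper on two small points it leaves implicit---the observation that $\supp Q=\minsupp M$ in case~\eqref{i:proj-cov} via the componentwise structure of projective covers over $\gf\iso$, and the preliminary check that $f$ is epic (via part~\eqref{i:epi} of \autoref{cor-Nak}) before invoking part~\eqref{i:sup-epi}.
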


\begin{proof}
We first describe a general construction.
Let $\MOR{q}{Q}{\spl M}$ be a homomorphism of $\gf\iso$-modules, and assume that $Q$ is projective.
As in \autoref{EI}, write $\ind$ and $\res$ for the induction and restriction functors along the inclusion of categories $\iso\TO\cat$.
Since $Q$ is projective, $q$~factors through the quotient
$\MOR{p_M}{\res M}{\spl M}$ as follows:
\[
\begin{tikzcd}
Q
\arrow[r, "\overline{q}"]
\arrow[rr, "q", rounded corners,
       to path={    (\tikztostart.north)
                 |- +(1.7,.3) [at end]\tikztonodes
                 -| (\tikztotarget.north)}
      ]
&
\res M
\arrow[r, "p_M"]
&
\spl M
\,.
\end{tikzcd}
\]
Inducing from $\iso$ to~$\cat$ we get
\[
\begin{tikzcd}
\ind Q
\arrow[r, "\ind\overline{q}"]
\arrow[dr, "\widetilde{q}"']
&
\ind\res M
\arrow[r, "\ind p_M"]
\arrow[d, "\varepsilon_M"]
&
\ind\spl M
\\
&
M
\end{tikzcd}
\]
where $\varepsilon$ is the counit of the adjunction and we define $\widetilde{q}=\varepsilon_M\circ\ind\overline{q}$.
We claim that $q$ is equal to the composition
\[
\begin{tikzcd}
Q
\arrow[r, "\tau_{Q}"]
&
\spl\ind Q
\arrow[r, "\spl\widetilde{q}"]
&
\spl M
\,,
\end{tikzcd}
\]
where $\tau$ is the natural isomorphism from~\eqref{eq:spl-ind}.
To see that this is true, use the naturality of~$\tau$ and the fact that the composition
\[
\begin{tikzcd}
\res M
\arrow[r, "\tau_{\res M}"]
&
\spl\ind\res M
\arrow[r, "\spl\varepsilon_M"]
&
\spl M
\end{tikzcd}
\]
is equal to~$p_M$; see \autoref{ind-spl}(d).

Since $\tau_{Q}$ is an isomorphism and $q=\spl\widetilde{q}\circ\tau_{Q}$, we see that $\spl\widetilde{q}$ is a (superfluous) epimorphism if and only if $q$ is a (superfluous) epimorphism.

Now, in order to prove~\eqref{i:free-epi}, choose a free $\gf\iso$-module $Q$ together with an epimorphism $\MOR{q}{Q}{\spl M}$ such that $Q(c)\neq0$ if and only if $\spl M(c)\neq0$.
Then $\ind Q$ is free and $\minsupp(\ind Q)=\minsupp(M)$ by construction.
The assumption on~$\supp(M)$ allows us to apply \autoref{cor-Nak}\eqref{i:epi} to conclude that $\MOR{\widetilde{q}}{\ind Q}{M}$ is an epimorphism.

For~\eqref{i:proj-cov}, let $\MOR{q}{Q}{\spl M}$ be a projective cover.
Then $\ind Q$ is projective, and $\supp(\ind Q)=\upset{\minsupp(M)}=\upset{\supp(M)}$.
The assumption on~$\supp(M)$ allows us to apply \autoref{cor-Nak}\eqref{i:sup-epi} to conclude that $\widetilde{q}$ is a projective cover.
\end{proof}

\begin{theorem}[Existence of projective covers and minimal projective resolutions]
\label{min-projres-local}
Let $\cat$ be a small EI~category, and let $M$ be a $\gf\cat$-module.
Assume that the upper set generated by~$\supp(M)$ in~$\cla$ is Artinian.
\begin{enumerate}
\item
\label{i:perfect-local}
If the rings~$\gf[c]$ are perfect for all $\cl(c)\in\upset\minsupp(M)$, then $M$ has a minimal projective resolution. If in addition all projective
$\gf[c]$-modules are free for all $\cl(c)\in\upset\minsupp(M)$ then
the minimal projective resolution of $M$ is a free resolution.
\item
\label{i:semi-perfect-local}
If the rings $\gf[c]$ are semi-perfect for all $\cl(c)\in\upset\minsupp(M)$, and if $M$ is finitely generated, then $M$ has a projective cover.
If additionally the category $\modules{\gf\cat}$ is Noetherian, then $M$ has a minimal projective resolution.
\end{enumerate}
\end{theorem}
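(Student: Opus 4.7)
The approach is to apply Lemma~\ref{proj-cov}(2) inductively, constructing the minimal projective resolution of $M$ one projective cover at a time. First I would produce a projective cover of $\spl M$ in $\modules{\gf\iso}$. Under the product decomposition $\modules{\gf\iso}\simeq\prod_{\cl(c)\in\cla}\modules{\gf[c]}$, this reduces to producing a projective cover of each nonzero $\spl M(c)$ as a $\gf[c]$-module, needed only for $\cl(c)\in\minsupp(M)\subseteq\upset\minsupp(M)$. In case~\eqref{i:perfect-local}, the perfectness of such $\gf[c]$ furnishes the covers directly. In case~\eqref{i:semi-perfect-local}, one uses that $\spl$ sends finitely generated $\gf\cat$-modules to finitely generated $\gf\iso$-modules when $\cat$ is EI (noted at the end of \autoref{EI}), and then semi-perfectness of $\gf[c]$ supplies each cover. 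Applying Lemma~\ref{proj-cov}(2) now yields a projective cover $\MOR{\varepsilon}{P_0}{M}$ with $\minsupp(P_0)=\minsupp(M)$.

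For the inductive step, set $K_0=\ker\varepsilon$, and more generally let $K_n=\ker(d_n)$ be the kernel of the projective cover $\MOR{d_n}{P_n}{K_{n-1}}$. The proof of Lemma~\ref{proj-cov}(2) establishes $\supp(P_n)=\upset\minsupp(P_n)$ together with $\minsupp(P_n)=\minsupp(K_{n-1})$, so inductively
\[
\supp(K_n)\ \subseteq\ \supp(P_n)\ \subseteq\ \upset\minsupp(M)\ =\ \upset\supp(M)
\,,
\]
the final equality using that $\cat$ is EI together with the Artinian hypothesis on $\upset\supp(M)$. Consequently $\upset\supp(K_n)$ remains Artinian, and $\upset\minsupp(K_n)\subseteq\upset\minsupp(M)$, so the (semi-)perfectness hypothesis on the relevant $\gf[c]$ is preserved. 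In case~\eqref{i:semi-perfect-local}, the Noetherian hypothesis additionally maintains finite generation since $K_n\subseteq P_n$ is a subobject of a finitely generated module. Hence Lemma~\ref{proj-cov}(2) applies to $K_n$ and produces the next projective cover $\MOR{d_{n+1}}{P_{n+1}}{K_n}$.

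Splicing together these covers, with $d_{n+1}$ factored through the inclusion $K_n\hookrightarrow P_n$, yields the desired minimal projective resolution. The freeness addenda then follow directly from the last clause of Lemma~\ref{proj-cov}(2): when every projective $\gf[c]$-module is free for $\cl(c)\in\upset\minsupp(M)$, each $P_n$ is itself free. The principal technical point throughout is ensuring that the hypotheses on support, Artinian-ness, and (in the semi-perfect case) finite generation all propagate through the iteration; the containment $\supp(K_n)\subseteq\upset\minsupp(M)$ is precisely what makes this work, explaining why the theorem is formulated in terms of $\upset\minsupp(M)$ rather than $\minsupp(M)$ alone.
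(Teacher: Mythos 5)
Your argument is correct and follows essentially the same route as the paper's: iterate Lemma~\ref{proj-cov}\eqref{i:proj-cov}, with the key observation (already flagged in the paper's proof) that $\supp(\ker\widetilde{q})\subseteq\supp(\ind Q)=\upset\supp(M)$, so the Artinian hypothesis, the bound $\upset\minsupp(K_n)\subseteq\upset\minsupp(M)$, and (in the semi-perfect case, via Noetherianity) finite generation all propagate along the iteration. You spell out the support bookkeeping in a bit more detail than the paper, but the underlying mechanism is identical.
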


\begin{proof}
\eqref{i:perfect-local} is easily proved by applying \autoref{proj-cov}\eqref{i:proj-cov} repeatedly.
The process can be iterated, because, using the notation of the proof of \autoref{proj-cov}, we have that $\supp(\ker\widetilde{q})\subseteq\supp(\ind Q)=\upset{\supp(M)}$ and hence $\upset{\supp(\ker\widetilde{q})}\subseteq\upset{\supp(M)}$.

In order to see that the same strategy also works for~\eqref{i:semi-perfect-local}, recall that the functor~$\spl$ sends finitely generated $\gf\cat$-modules to finitely generated $\gf\iso$-modules.
So the assumptions in~\eqref{i:semi-perfect-local} imply that $\spl M$ has a finitely generated projective cover~$\MOR{p}{P}{\spl M}$.
Therefore $\ind P$ is finitely generated and, if $\modules{\gf\cat}$ is Noetherian, then $\ker\widetilde{p}$ is finitely generated, too.
\end{proof}

Now the following result is an immediate consequence of \autoref{min-projres-local}.

\begin{corollary}
\label{min-projres-global}
Let $\cat$ be a small EI~category.
Assume that every finitely generated upper set in~$\cla$ is Artinian.
\begin{enumerate}
\item
\label{i:perfect-global}
If the category $\modules{\gf\iso}$ is perfect, then any $\gf\cat$-module of finite type has a minimal projective resolution.
\item
\label{i:semi-perfect-global}
If the category $\modules{\gf\iso}$ is semi-perfect, then also $\modules{\gf\cat}$ is semi-perfect.
If additionally $\modules{\gf\cat}$ is Noetherian, then any finitely generated $\gf\cat$-module has a minimal projective resolution.
\end{enumerate}
If in addition all projective $\gf\iso$-modules are free, then the minimal
projective resolutions from parts \eqref{i:perfect-global} and~\eqref{i:semi-perfect-global} above are also free.
\end{corollary}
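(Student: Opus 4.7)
My plan is to deduce this corollary as an immediate consequence of \autoref{min-projres-local} by verifying its local hypotheses in each case; the main tool is a simple support calculation.

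First I would observe that whenever $M$ is a $\gf\cat$-module of finite type, the upper set $\upset\supp(M)$ in $\cla$ is Artinian. To see this, write $M$ as a quotient of a free module $\free B$ with $B$ of finite type, so that $\supp(B)$ is a finite subset of $\cla$. Then \autoref{minsupp-free} gives $\supp(M)\subseteq\supp(\free B)=\upset\supp(B)$, hence $\upset\supp(M)\subseteq\upset\supp(B)$; the latter is a finitely generated upper set, Artinian by hypothesis, and Artinianness is inherited by subsets. The same argument applies to finitely generated modules, since those are in particular of finite type.

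Next I would invoke the equivalence $\modules{\gf\iso}\simeq\prod_{\cl(c)\in\cla}\modules{\gf[c]}$ recorded in \autoref{EI}: it makes transparent that $\modules{\gf\iso}$ is (semi\nobreakdash-)perfect if and only if every $\gf[c]$ is (semi-)perfect, and analogously that projective $\gf\iso$-modules are all free precisely when every projective $\gf[c]$-module is free. In particular the global hypotheses of the corollary imply the relevant local ring-theoretic hypotheses of \autoref{min-projres-local} uniformly over $\cla$, and a fortiori over the subset $\upset\minsupp(M)$.

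With these two observations in place, part \eqref{i:perfect-global} follows directly from \autoref{min-projres-local}\eqref{i:perfect-local} applied to any $\gf\cat$-module of finite type. For part \eqref{i:semi-perfect-global}, applying \autoref{min-projres-local}\eqref{i:semi-perfect-local} to an arbitrary finitely generated $\gf\cat$-module yields a projective cover, which is precisely the semi-perfectness of $\modules{\gf\cat}$; the Noetherian assumption then upgrades this to a full minimal projective resolution. The freeness addendum transfers verbatim from the corresponding clauses in \autoref{min-projres-local}. I do not anticipate a substantive obstacle, since the corollary is genuinely immediate; the only care needed is to phrase the Artinianness reduction uniformly so that it applies to both the finite type and finitely generated cases simultaneously.
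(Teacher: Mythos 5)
Your proposal is correct and takes essentially the same route as the paper, which merely records this corollary as an ``immediate consequence'' of \autoref{min-projres-local}; the two reductions you supply (that finite type forces $\upset\supp(M)$ to lie inside the finitely generated, hence Artinian, upper set $\upset\supp(B)$, and that the product decomposition of $\modules{\gf\iso}$ translates the global (semi-)perfectness and projective-implies-free hypotheses into the required local conditions on the $\gf[c]$) are exactly the details being left to the reader.
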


As a corollary of the proof of \autoref{min-projres-global} we also
have the following

\begin{corollary}
\label{min-res-zero-diff}
Let $\cat$ be a small EI~category, and let $M$ be a $\gf\cat$-module.
Assume that the upper set generated by~$\supp(M)$ in~$\cla$ is Artinian.
Assume that the rings~$\gf[c]$ are semisimple for all $\cl(c)\in\upset\minsupp(M)$, and let $\cx{P}$ be a  minimal projective resolution of $M$.

Then $\spl\cx{P}$ is a chain complex with zero differential.
\end{corollary}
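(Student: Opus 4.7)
The plan is to exploit the key identity $q=\spl\widetilde{q}\circ\tau_{Q}$ from the proof of \autoref{proj-cov}, which expresses a projective cover $\widetilde q$ upstairs in terms of a projective cover $q$ downstairs of $\spl$, together with the fact that semisimplicity makes $\spl$ land in a semisimple category where superfluous epimorphisms are forced to be isomorphisms.

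First I would observe that existence of $\cx{P}$ is guaranteed by \autoref{min-projres-local}\eqref{i:perfect-local}, since semisimple rings are perfect. Write the resolution as
\[
\dotsb\TO P_2\TO[d_2] P_1\TO[d_1] P_0\TO[\varepsilon] M\TO 0
\]
and factor each differential as $d_n=\iota_{n-1}\circ\pi_n$, where $\MOR{\pi_n}{P_n}{\ker d_{n-1}}$ is a projective cover (with the convention $\pi_0=\varepsilon$ and $\ker d_{-1}=M$) and $\MOR{\iota_{n-1}}{\ker d_{n-1}}{P_{n-1}}$ is the inclusion. By \autoref{proj-cov}, for each $n\ge 0$ we have $P_n=\ind Q_n$ with $\MOR{q_n}{Q_n}{\spl\ker d_{n-1}}$ a projective cover in $\modules{\gf\iso}$, and the identity $q_n=\spl\pi_n\circ\tau_{Q_n}$ holds.

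Next I would use semisimplicity. Since $\upset\minsupp(M)\supseteq \minsupp(\cx P)$ (by the proof of \autoref{min-projres-local}, the supports of the syzygies stay inside $\upset\supp(M)$, and only $\minsupp$ part matters for $\spl$), each $\spl\ker d_{n-1}$ is a module over $\gf\iso$ supported at isomorphism classes where $\gf[c]$ is semisimple, hence is itself a semisimple (thus projective) $\gf\iso$-module. In a semisimple category the only superfluous epimorphisms are isomorphisms, so each projective cover $q_n$ is an isomorphism. Combined with the fact that $\tau_{Q_n}$ is always an isomorphism (see equation \eqref{eq:spl-ind}), this forces $\spl\pi_n$ to be an isomorphism for every $n\ge 0$.

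Finally I would derive the vanishing of the differential of $\spl\cx P$. Since $\iota_{n-1}$ lands in $\ker d_{n-1}$, for $n\ge 2$ we have $\iota_{n-2}\circ\pi_{n-1}\circ\iota_{n-1}=d_{n-1}\circ\iota_{n-1}=0$, and the monomorphism $\iota_{n-2}$ can be cancelled to give $\pi_{n-1}\circ\iota_{n-1}=0$; for $n=1$, the relation $\varepsilon\circ\iota_0=0$ gives the analogous identity with $\pi_0=\varepsilon$ in place of $\pi_{n-1}$. Applying the functor $\spl$ and using that $\spl\pi_{n-1}$ is an isomorphism, we conclude $\spl\iota_{n-1}=0$, whence $\spl d_n=\spl\iota_{n-1}\circ\spl\pi_n=0$ for every $n\ge 1$. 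This proves that $\spl\cx P$ has zero differential. The only delicate point is the bookkeeping needed to know that $\spl\pi_{n-1}$ really is an isomorphism at every stage, i.e.\ that the supports of successive syzygies remain inside $\upset\minsupp(M)$ so that semisimplicity of $\gf[c]$ still applies; this is immediate from $\minsupp(\ker d_{n-1})\subseteq\upset\supp(M)$ together with the remarks on behaviour of $\minsupp$ in short exact sequences.
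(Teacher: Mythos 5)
Your proof is correct and follows essentially the same route as the paper's: both use the construction in the proof of \autoref{min-projres-local} to show that semisimplicity forces $\spl\pi_n$ to be an isomorphism for each projective cover $\pi_n\colon P_n\to\ker d_{n-1}$, and then derive vanishing of the differential of $\spl\cx{P}$. The only cosmetic difference is in the final step: the paper cites right exactness of $\spl$ applied to $0\to\ker d_n\to P_n\to\ker d_{n-1}\to0$, whereas you deduce $\spl\iota_{n-1}=0$ directly from the relation $\pi_{n-1}\circ\iota_{n-1}=0$ and functoriality of $\spl$ — both arguments are valid and equivalent in substance.
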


\begin{proof}
As shown in the proof of \autoref{min-projres-local}, the resolution
$\cx{P}$ is constructed at stage $n+1$ by applying $\ind$ to a projective cover of $\spl\ker(P_n\TO P_{n-1})$. Because of the semisimplicity assumption
$\spl\ker(P_n\TO P_{n-1})$ is already projective, therefore for each $n$ we
have that $\spl\bigl( P_{n+1}\TO \ker(P_n\TO P_{n-1})\bigr)$ is an isomorphism.
The desired conclusion is now immediate from the right exactness of $\spl$.
\end{proof}

%%%%%%%%%%%%%%%%%%%%%%%%%%%%%%%%%%%%%%%%%%%%%%%%%%%%%%%%%%%%%%%%%%%%%%%%%%%%%%

\section{Betti categories}
\label{BETTI}

\begin{definition}[Betti category and Betti numbers]
Let $\cat$ be a small EI category.
Suppose that a $\gf\cat$-module~$M$ has a minimal projective resolution $\MOR{\varepsilon}{\cx{P}}{M}$.

(a)
Define the \emph{Betti category of~$M$} to be the full subcategory~$\Betti(M)$ of~$\cat$ with set of objects
\[
\obj\Betti(M)
=
\SET{c\in\obj\cat}{\cl(c)\in\minsupp(\cx{P})}
\,.
\]

(b)
Suppose in addition the minimal projective
resolution $\cx{P}$ is a free resolution. Then, for each object
$c\in\obj\cat$ and for each $n\in\IN$, the $\gf[c]$-module
$\spl P_n(c)$ is free and has a well defined rank, which is equal to
$\#B(c)$, where $B$ is a basis of $P_n$. We set
%number
\[
\beta_{n,c}(M) = \rank_{\gf[c]}\spl P_n(c) = \# B(c)
\,,
\]
and call it the \emph{$n$-th Betti number of $M$ at $c$}.
\end{definition}

Notice that the Betti category of~$M$ and the Betti numbers of $M$
do not depend on~$\cx{P}$, because minimal projective resolutions are unique up to isomorphism.

\begin{example}[Betti category and Betti numbers of the constant module]
\label{betti-constant-functor}
Let $\cat$ be an EI category where all isomorphisms are identities,
and the poset $\cla$ is Artinian and finitely generated. Let $\gf$ be a field
and let $\const_\gf$ be the constant $\gf\cat$-module.
Thus $\supp(\const_\gf)=\cla$ is generated by its finitely many minimal
elements, hence $\const_\gf$ is a finitely generated $\gf\cat$-module.
Since in $\modules{\iso}$ all modules are free,
\autoref{min-projres-global} shows that
$\const_\gf$ has a minimal free resolution, hence a well defined Betti
category and well defined Betti numbers. Since the normalized bar
resolution $\cx{B}(\const_\gf)$ is a free resolution of $\const_\gf$,
it is a direct sum of the minimal free resolution $\cx{P}$ and a
contractible complex.  Furthermore, $\spl\cx{P}$ has zero differential
by \autoref{min-res-zero-diff}. It follows that for each object $c$
we have
\[
\beta_{n,c}(\const_\gf)=\dim_\gf\HH_n\spl\cx{B}(\const_\gf)(c).
\]
Since composition of non-identity morphisms in $\cat$ is not an identity,
the collection $\CN\cat'$ of the non-degenerate simplices of the
nerve of $\cat'$ is a simplicial subcomplex of that nerve
for any subcategory $\cat'$ of $\cat$. In particular this is true
for the categories $\cat_{\le c}$ and $\cat_{< c}$ for all objects $c$ of $\cat$.  Therefore it is straightforward from the definitions that
\[
\beta_{n,c}(\const_\gf)=\dim_\gf\HH_n(\CN\cat_{\le c},\ \CN\cat_{<c};\ \gf ),
\]
the $n$th relative homology of the pair
$(\CN\cat_{\le c},\ \CN\cat_{<c} )$ with coefficients in $\gf$.
In particular, $c$ is an object
of the Betti category of $\const_\gf$ if and only if the pair
$(\CN\cat_{\le c},\ \CN\cat_{<c})$ has nontrivial relative homology over $\gf$.
\end{example}

\begin{example}
\label{projective-twisted-cubic}
Let $\gf$ be a field.
In the polynomial ring~$R=\gf[a,b,c,d]$ with the usual
$\IZ$-grading consider the
homogeneous ideal
\[
I=\langle\,
ac-b^2, bc-ad, bd-c^2
\,\rangle
\,,
\]
the defining ideal of the twisted cubic curve in~$\IP^3$.
%This is the ``Running Example'' in~\cite{Peeva}*{Chapter~IV}.
The $\IZ$-graded minimal free resolution of~$R/I$ over~$R$ is
\[
0
\TO
R^2
\xrightarrow{\begin{psmallmatrix}
d && c \\ c && b \\ b && a
\end{psmallmatrix}}
R^3
\xrightarrow{\begin{psmallmatrix}
ac-b^2 && bc-ad && bd-c^2
\end{psmallmatrix}}
R
\TO
R/I
\TO
0
\,.
\]
The degrees of the basis elements of the free modules
in this resolution are $0, 2$, and $3$, and thus
the Betti category $\Betti$ of the $\IZ$-graded $R$-module $R/I$,
considered  as a functor from the action category
$\act{\IN^4}{\IZ}$ into the category of $\gf$-vector spaces,
is the full subcategory
of~$\act{\IN^4}{\IZ}$ with set of objects $\{0,2,3\}$.
Recall from \autoref{graded-modules-example} that morphisms
in $\act{\IN^4}{\IZ}$ are identified with monomials, in particular
we have
$
\mor_{\Betti}(0,2)=
\mor_{\act{\IN^4}{\IZ}}(0,2)=
\{a^2, ab, ac, ad, b^2, bc, bd, c^2, cd, d^2\}
$,
and
$\mor_{\Betti}(2,3)=\mor_{\act{\IN^4}{\IZ}}(2,3)=\{a,b,c,d\}$. The morphisms
from $0$ to $3$ are given by all monomials of degree $3$.
\end{example}

\begin{example}
In the polynomial ring~$R=\gf[x_{11},x_{12},x_{21},x_{22},x_{31},x_{32}]$
with the standard $\IZ$-grading consider the ideal
\[
I=\langle\,
x_{11}x_{22}-x_{12}x_{21},\; x_{11}x_{32}-x_{12}x_{31},\; x_{21}x_{32}-x_{22}x_{31}
\,\rangle
\]
generated by the $(2\times2)$-minors of the generic $(3\times 2)$-matrix
\[
X=
\begin{pmatrix}
x_{11}&x_{12}
\\
x_{21}&x_{22}
\\
x_{31}&x_{32}
\end{pmatrix}
.
\]
The $\IZ$-graded minimal free resolution of~$R/I$ over~$R$ is
\[
0
\TO
R^2
\TO[X]
R^3
\xrightarrow{(\,
x_{21}x_{32}-x_{22}x_{31}
\ \;
x_{12}x_{31}-x_{11}x_{32}
\ \;
x_{11}x_{22}-x_{12}x_{21}
)}
R
\TO
R/I
\TO
0
\,.
\]
The degrees of the basis elements of the free modules in this resolution are $0$, $2$, and~$3$, and thus the Betti category $\CB$ of the $\IZ$-graded $R$-module $R/I$, considered  as a functor from the action category $\act{\IN^6}{\IZ}$ into the category of $\gf$-vector spaces, is the full subcategory of~$\act{\IN^6}{\IZ}$ with set of objects $\{0,2,3\}$.
Since morphisms in $\act{\IN^6}{\IZ}$ are identified with monomials in $R$, we have for example that
% $
% \mor_{\CB}(0,2)=
% \mor_{\act{\IN^4}{\IZ}}(0,2)=
% \{a^2, ab, ac, ad, b^2, bc, bd, c^2, cd, d^2\}
% $,
% and
$\mor_{\CB}(2,3)=\mor_{\act{\IN^6}{\IZ}}(2,3)=
\{x_{11},x_{12},x_{21},x_{22},x_{31},x_{32}\}$.
% The morphisms from $0$ to $3$ are given by all monomials of degree $3$.
In particular, this Betti category is not equivalent to the Betti category from Example~\ref{projective-twisted-cubic}.
\end{example}

\begin{remark}
The quotient rings from the previous two examples will appear again as instances of toric rings in \autoref{TORIC}; see Examples~\ref{twisted-cubic} and~\ref{generic-toric}.
There the gradings will be different, and the corresponding Betti categories will turn out to be equivalent, and in fact even isomorphic.
\end{remark}

%%%%%%%%%%%%%%%%%%%%%%%%%%%%%%%%%%%%%%%%%%%%%%%%%%%%%%%%%%%%%%%%%%%%%%%%%%%%%%

\section{Main results}

We are now ready to state and prove the general version of our main result, which we specialize to toric rings in the next section.
This result essentially says that the Betti category of a $\gf\cat$-module~$M$ is in a certain sense the ``smallest'' discrete combinatorial object that captures the structure of the minimal projective resolution of~$M$.

\begin{theorem}
\label{main-technical}
Let $\cat$ and~$\catwo$ be small EI~categories.
Assume that:
\begin{itemize}
\item
$\MOR{\varepsilon}{\cx{P}}{M}$
is a minimal projective resolution of a $\gf\cat$-module~$M$;
\item
$\MOR{\delta}{\cx{Q}}{N}$
is a minimal projective resolution of a $\gf\catwo$-module~$N$;
\item
the upper sets\/ $\upset\supp(M)$ and\/ $\upset\supp(N)$ are Artinian;
\item
$\cat'$ is a full subcategory of~$\cat$ such that $\minsupp(\cx{P})\subseteq\cla[\cat']$;
\item
$\catwo'$ is a full subcategory of~$\catwo$ such that $\minsupp(\cx{Q})\subseteq\cla[\catwo']$;
\item
there exist an equivalence of categories $\MOR{\alpha}{\cat'}{\catwo'}$ and an isomorphism
\[
\MOR[\cong]{f}
{\res_{\varrho}(M)}
{\res_{\alpha\varsigma}(N)}
%\,,
\]
of\/ $\gf\cat'$-modules,
where $\cat' \TO[\varrho] \cat$ and $\catwo'\TO[\varsigma]\catwo$ are  the inclusion functors.
\end{itemize}
Then $\alpha$ induces an equivalence of Betti categories
$\Betti(M)\TO \Betti(N)$, and
$f$ induces an isomorphism
\[
\cx{P}
\cong
\ind_{\varrho}\res_{\alpha\varsigma}(\cx{Q})
%\,.
\]
of chain complexes of $\gf\cat$-modules.
\end{theorem}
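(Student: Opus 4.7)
The plan is to exploit that $\MOR{\alpha}{\cat'}{\catwo'}$ is an equivalence, so that $\res_\alpha$ is an exact equivalence of module categories preserving projectivity, projective covers, and minimal projective resolutions, combined with the uniqueness up to isomorphism of minimal projective resolutions. Concretely, I would first reduce both minimal resolutions to the common category $\modules{\gf\cat'}$, identify them there via~$f$, and then induce back up to~$\cat$.

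The first technical ingredient is the following key lemma: \emph{if $\cat$ is a small EI category, $\cat'\subseteq\cat$ is a full subcategory with inclusion $\varrho$, and $\MOR{\varepsilon}{\cx{P}}{M}$ is a minimal projective resolution of a $\gf\cat$-module~$M$ with $\upset{\supp(M)}$ Artinian and $\minsupp(\cx{P})\subseteq\cla[\cat']$, then $\res_\varrho\cx{P}\TO\res_\varrho M$ is a minimal projective resolution of $\res_\varrho M$ in $\modules{\gf\cat'}$.} Because each $P_n$ is projective with $\minsupp(P_n)\subseteq\cla[\cat']$, the description of projective covers in \autoref{proj-cov}\eqref{i:proj-cov} forces $P_n$ to be induced from a projective $\gf\iso$-module supported on $\iso[\cat']$, and a direct computation using that $\cat'$ is full in $\cat$ shows that $\res_\varrho P_n$ is projective in $\modules{\gf\cat'}$ (and free if $\cx{P}$ is free, by \autoref{minsupp-free}); together with the exactness of $\res_\varrho$ this yields a projective resolution. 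The delicate point is minimality, which I would prove by induction on homological degree using \autoref{cor-Nak}\eqref{i:sup-epi}, together with the observation that the splitting functors computed in $\cat$ and in $\cat'$ coincide on the modules $P_n$, so that the inductive construction of the minimal resolution from \autoref{proj-cov}\eqref{i:proj-cov} runs in parallel in both categories.

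Granting the lemma, applying it to $\cx{P}$ shows that $\res_\varrho\cx{P}$ is a minimal projective resolution of $\res_\varrho M$, and applying it to $\cx{Q}$ (with $\catwo'$ in place of $\cat'$) shows that $\res_\varsigma\cx{Q}$ is a minimal projective resolution of $\res_\varsigma N$ in $\modules{\gf\catwo'}$. Since $\res_\alpha$ is an equivalence, $\res_{\alpha\varsigma}\cx{Q}=\res_\alpha\res_\varsigma\cx{Q}$ is a minimal projective resolution of $\res_{\alpha\varsigma}N$ in $\modules{\gf\cat'}$. The isomorphism $f$ of target modules together with the uniqueness of minimal projective resolutions then yields an isomorphism $\res_\varrho\cx{P}\cong\res_{\alpha\varsigma}\cx{Q}$ of chain complexes in $\modules{\gf\cat'}$. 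Applying $\ind_\varrho$ and using that each $P_n$ is of the form $\ind_\varrho$ of a projective $\gf\cat'$-module, so that the counit $\ind_\varrho\res_\varrho P_n\TO P_n$ is an isomorphism, one obtains $\cx{P}\cong\ind_\varrho\res_\varrho\cx{P}\cong\ind_\varrho\res_{\alpha\varsigma}\cx{Q}$, as required. For the Betti categories, this chain of isomorphisms identifies $\minsupp(\cx{P})$ with the image under~$\alpha^{-1}$ of $\minsupp(\cx{Q})$, so the restriction of $\alpha$ to $\Betti(M)$ provides the desired equivalence $\Betti(M)\TO\Betti(N)$.

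The main obstacle will be the minimality half of the key lemma. While the splitting functors computed in $\cat$ and in $\cat'$ coincide on the projective modules $P_n$, they may in general disagree on the syzygy modules $\ker d_n$, because the Jacobson-type radical $\jrad$ computed in $\cat'$ is typically a proper submodule of the one computed in $\cat$ restricted to~$\cat'$. The induction must use the hypothesis $\minsupp(\cx{P})\subseteq\cla[\cat']$ to rule out the ``new'' contributions to the radical coming from objects of $\cat\setminus\cat'$, and then apply \autoref{cor-Nak}\eqref{i:sup-epi}---where the Artinian hypothesis on $\upset{\supp(M)}$ is consumed---to transfer the superfluous-epimorphism characterization of the projective cover from $\modules{\gf\cat}$ down to $\modules{\gf\cat'}$.
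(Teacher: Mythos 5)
Your approach is correct but genuinely different from the paper's. You factor out a key lemma asserting that $\res_\varrho\cx{P}$ is a \emph{minimal} projective resolution of $\res_\varrho M$, and then invoke uniqueness of minimal projective resolutions in $\modules{\gf\cat'}$ to conclude. The paper instead works directly: it observes (\autoref{min-projres-ind-res}) only that $\res_\varrho\cx{P}$ is a projective resolution (not necessarily minimal), lifts $f$ to a chain homotopy equivalence, induces up, and then deduces that the resulting decomposition $\ind_\varrho\res_{\alpha\varsigma}\cx{Q}\cong\cx{P}\oplus\cx{P'}$ with $\cx{P'}$ contractible forces $\cx{P'}=0$ by inducing back over to $\catwo$ and using minimality of $\cx{Q}$. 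That symmetric ``ping-pong'' between the minimalities of $\cx{P}$ and $\cx{Q}$ is what replaces your key lemma in their argument. Your modular formulation is arguably cleaner and the lemma is of independent interest.

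Regarding the ``main obstacle'' you raise at the end: your worry is in fact resolvable, and your instinct about which hypothesis to use is right, but the sketch as written is too vague to count as a proof. The crucial claim is that for each syzygy module $K_n=\ker d_{n-1}$ and each $c\in\obj\cat'$, the two radicals
$\jrad(K_n)(c)$ and $\jrad'(\res_\varrho K_n)(c)$ (radicals computed in $\cat$ and in $\cat'$ respectively) coincide, not merely that one contains the other. The argument is: $K_n$ is generated at objects of $\cat'$ (as a quotient of $P_n$, which has basis at $\cat'$-objects by the $\minsupp$ hypothesis). So any contribution $M(u)(K_n(b))$ to $\jrad(K_n)(c)$ from a non-isomorphism $u\colon b\to c$ with $b\notin\cat'$ can be rewritten in terms of elements $M(uv)(k)$ with $k\in K_n(a)$, $a\in\cat'$, $v\colon a\to b$ arbitrary. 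Since $\cat$ is EI, $uv\colon a\to c$ is again a non-isomorphism (if $uv$ were an isomorphism then $\cl(a)=\cl(b)=\cl(c)$, whence $u$ would be an isomorphism), and $a,c\in\cat'$, so these elements already lie in $\jrad'(\res_\varrho K_n)(c)$. Hence $\spl'(\res_\varrho K_n)$ agrees with the restriction of $\spl(K_n)$, the projective-cover condition transfers, and \autoref{cor-Nak}\eqref{i:sup-epi} applies. You should spell out this radical comparison rather than leaving it as a hope; alternatively, a shorter proof of your key lemma is available by the same contractible-summand trick the paper uses, applied once to $\ind_\varrho\res_\varrho$ and using faithfulness of $\ind_\varrho$ on projective $\gf\cat'$-modules.
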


\begin{remark}
An important special case of this theorem is when $\cat'=\Betti(M)$ and $\catwo'=\Betti(N)$, which satisfy by definition the assumptions on the minimal supports.
Moreover, in many interesting cases (e.g., for monomial ideals and toric rings) the restriction of the corresponding modules to the Betti categories are the constant functors with value the ground field~$\gf$.
In these cases, of course, the existence of the isomorphism~$f$ follows automatically from the existence of the equivalence of categories~$\alpha$.
\end{remark}

% \[
% \begin{tikzcd}[row sep=tiny]
% \Betti(M)
% \arrow[hookrightarrow, r]
% \arrow[dd, "\alpha"']
% &
% \cat
% \arrow[rrd, "M" , pos=.4]
% \\
% &
% &
% &
% \modules{\gf}
% \\
% \Betti(N)
% \arrow[hookrightarrow, r]
% &
% \catwo
% \arrow[rru, "N"', pos=.4]
% \end{tikzcd}
% \]

\begin{remark}
\autoref{main-technical} generalizes (and reproves) \cite{bettiposets}*{Theorem~5.3 on page~5124}.
\end{remark}

For the proof of this theorem, we need the following lemma and corollary.

\begin{lemma}
\label{minsupp-res-ind}
Let $\cat$ be a small EI category and
let $P$ be a projective $\gf\cat$-module with
%Assume that
$\supp(P)$ Artinian.
Let $\cat'$ be a full subcategory of~$\cat$ such that $\minsupp(P)\subseteq\cla[\cat']$, and let
$\MOR{\varrho}{\cat'}{\cat}$ be the inclusion functor.

Then $\res_{\varrho}P$ is a projective $\gf\cat'$-module, and the counit of the adjunction is an isomorphism $\ind_{\varrho}\res_{\varrho}P\TO[\smash{\cong}]P$.
\end{lemma}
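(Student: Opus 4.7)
The plan is to reduce the statement to the case of a free $\gf\cat$-module whose basis is supported in $\obj\cat'$, where the counit can be inspected directly summand by summand. To this end I would first invoke \autoref{proj-cov}\eqref{i:free-epi} (which applies because $\supp(P)$ is Artinian) to obtain an epimorphism $\MOR{q}{F}{P}$ from a free $\gf\cat$-module~$F$ with $\minsupp(F)=\minsupp(P)$. Since $P$ is projective this epimorphism splits, giving a decomposition $F\cong P\oplus K$ of $\gf\cat$-modules.

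Next, by \autoref{minsupp-free} in the EI setting, any basis $B$ of~$F$ satisfies $\supp(B)=\minsupp(F)\subseteq\cla[\cat']$. Because $\cat'$ is a \emph{full} subcategory of~$\cat$, each isomorphism class in $\supp(B)$ is represented by some object in $\obj\cat'$, and since the isomorphism class of a free module $\gf[\mor_\cat(c,-)]$ on one generator depends only on the isomorphism class of its basepoint~$c$, I may arrange that $F=\bigoplus_{i\in I}\gf[\mor_\cat(c_i,-)]$ with all $c_i\in\obj\cat'$. For each such summand, fullness of $\cat'$ gives $\res_\varrho\gf[\mor_\cat(c_i,-)]=\gf[\mor_{\cat'}(c_i,-)]$, which is free over $\gf\cat'$, while the formula for induction on representables recalled in \autoref{IND-RES} yields $\ind_\varrho\gf[\mor_{\cat'}(c_i,-)]\cong\gf[\mor_\cat(c_i,-)]$, the identifying isomorphism being precisely the counit. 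Since both $\ind_\varrho$ and $\res_\varrho$ commute with direct sums, it follows that $\res_\varrho F$ is a free $\gf\cat'$-module and that $\MOR[\cong]{\varepsilon_F}{\ind_\varrho\res_\varrho F}{F}$ is an isomorphism.

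Finally, naturality of $\varepsilon$ with respect to the section and retraction of the splitting $F\cong P\oplus K$ shows that $\varepsilon_F$ decomposes as $\varepsilon_P\oplus\varepsilon_K$; since a direct sum of morphisms is an isomorphism if and only if each summand is, $\varepsilon_P$ is an isomorphism. Likewise $\res_\varrho P$ is a direct summand of the free $\gf\cat'$-module $\res_\varrho F$, hence projective. The delicate point is the choice of $F$ in the first two steps: it is precisely the hypothesis $\minsupp(P)\subseteq\cla[\cat']$, combined with fullness of~$\cat'$, that lets one realize a free cover of~$P$ with basis already inside $\obj\cat'$, reducing the assertion to the trivial computation of the counit on representables.
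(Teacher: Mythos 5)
Your proof is correct and takes essentially the same route as the paper's: both handle the free case first (by inspecting the counit on representables, using $\minsupp(F)=\supp(B)\subseteq\cla[\cat']$) and then reduce the projective case to the free one via the direct-summand decomposition supplied by \autoref{proj-cov}\eqref{i:free-epi}, using that $\ind_\varrho$ and $\res_\varrho$ preserve direct sums. You spell out more explicitly the replacement of basepoints by isomorphic representatives in $\obj\cat'$ and the naturality argument splitting $\varepsilon_F=\varepsilon_P\oplus\varepsilon_K$, but this is just finer-grained detail on the same argument.
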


\begin{proof}
If $P$ is free with basis~$B$, then $\minsupp(P)=\SET{\cl(c)\in\cla}{B(c)\neq\emptyset}$; compare \autoref{minsupp-free}.
So, if this set is contained in~$\cla[\cat']$, it is clear that $\res_{\varrho}P$ is a free $\gf\cat'$-module and $\ind_{\varrho}\res_{\varrho}P\cong P$.

If $P$ is projective and $\supp(P)$ is Artinian, then $P$ is a direct summand of a free module~$F$ with $\minsupp(F)=\minsupp(P)$ by \autoref{proj-cov}\eqref{i:free-epi}.
Since induction and restriction preserve direct sums, the statements follow.
\end{proof}

\begin{corollary}
\label{min-projres-ind-res}
Let\/ $\cat$ be a small EI~category.
Let $\MOR{\varepsilon}{\cx{P}}{M}$ be a minimal projective resolution of a\/ $\gf\cat$-module~$M$ with\/ $\upset\supp(M)$ Artinian.
Let\/ $\cat'$ be a full subcategory of\/ $\cat$ such that\/ $\minsupp(\cx{P})\subseteq\cla[\cat']$, and let $\MOR{\varrho}{\cat'}{\cat}$ be the inclusion functor.
Then:
\begin{enumerate}
\item
\label{i:ind-res}
$\res_\varrho\cx{P}$ is a projective resolution of\/ $\res_\varrho M$,
and the counit of the adjunction gives isomorphisms\/
$\ind_\varrho\res_\varrho M \cong M$
and\/
$
\ind_\varrho\res_\varrho \cx{P}
\cong
\cx{P}
\,.
$

\item
\label{i:ind}
If $\MOR{\varepsilon'}{\cx{P'}}{\res_\varrho M}$ is a projective resolution in\/ $\modules{\gf\cat'}$,
then\/ $\ind_\varrho\cx{P'}$ is a projective resolution of\/ $M$ in\/ $\modules{\gf\cat}$.
\end{enumerate}
\end{corollary}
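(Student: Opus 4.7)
The plan is to deduce both parts from \autoref{minsupp-res-ind} applied term-by-term to~$\cx{P}$, combined with the right exactness of~$\ind_\varrho$ and the standard comparison theorem for projective resolutions.

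For part~\eqref{i:ind-res}, I would first observe that every term $P_n$ of~$\cx{P}$ satisfies the hypotheses of \autoref{minsupp-res-ind}: its minimal support lies in $\minsupp(\cx{P})\subseteq\cla[\cat']$ by assumption, and its support is contained in $\upset\supp(M)$ (since this containment is preserved at each stage of the iterative construction used in the proof of \autoref{min-projres-local}), hence is Artinian. Therefore each $\res_\varrho P_n$ is a projective $\gf\cat'$-module and each counit $\ind_\varrho\res_\varrho P_n\TO P_n$ is an isomorphism; by naturality these assemble into an isomorphism of chain complexes $\ind_\varrho\res_\varrho\cx{P}\cong\cx{P}$. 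Exactness of~$\res_\varrho$ then gives that $\res_\varrho\cx{P}\TO\res_\varrho M$ is a projective resolution. To obtain $\ind_\varrho\res_\varrho M\cong M$, I would apply the right exact functor~$\ind_\varrho$ to the exact sequence $\res_\varrho P_1\TO\res_\varrho P_0\TO\res_\varrho M\TO 0$ and compare via the counit with the exact sequence $P_1\TO P_0\TO M\TO 0$: the two leftmost vertical maps are the counit isomorphisms just established, so the rightmost one must be an isomorphism as well.

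For part~\eqref{i:ind}, let $\cx{P'}\TO\res_\varrho M$ be any projective resolution in $\modules{\gf\cat'}$. The comparison theorem for projective resolutions, applied to $\cx{P'}$ and $\res_\varrho\cx{P}$ (both projective resolutions of $\res_\varrho M$), yields a chain homotopy equivalence $\cx{P'}\simeq\res_\varrho\cx{P}$ covering the identity on~$\res_\varrho M$. Since $\ind_\varrho$ is additive it preserves chain homotopy equivalences, and combining with the isomorphism from part~\eqref{i:ind-res} produces a chain homotopy equivalence $\ind_\varrho\cx{P'}\simeq\cx{P}$ compatible with the augmentations $\ind_\varrho\cx{P'}\TO\ind_\varrho\res_\varrho M\cong M$ and $\cx{P}\TO M$. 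Consequently $\ind_\varrho\cx{P'}$ has homology concentrated in degree zero with value~$M$, and each $\ind_\varrho P'_n$ is projective because $\ind_\varrho$, being left adjoint to the exact functor $\res_\varrho$, preserves projectives. Hence $\ind_\varrho\cx{P'}\TO M$ is a projective resolution.

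The main obstacle is really just the bookkeeping with augmentations—checking that the term-wise counit isomorphisms of part~\eqref{i:ind-res} commute with the differentials, which follows from naturality of the counit, and that the chain homotopy equivalence used in part~\eqref{i:ind} is compatible with the augmentations, which is built into the statement of the comparison theorem. Beyond \autoref{minsupp-res-ind} no new technical ingredient is needed.
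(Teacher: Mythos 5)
Your proof follows essentially the same route as the paper: \autoref{minsupp-res-ind} applied term-by-term, naturality of the counit, exactness of restriction, and the comparison theorem plus the fact that induction preserves projectives for part~\eqref{i:ind}. One small imprecision: to justify $\supp(P_n)\subseteq\upset\supp(M)$ you appeal to the ``iterative construction used in the proof of \autoref{min-projres-local},'' but the corollary's hypotheses do not guarantee $\cx{P}$ arises that way (the iterative construction of a \emph{minimal} resolution via \autoref{proj-cov}\eqref{i:proj-cov} needs projective covers in $\modules{\gf\iso}$, which are not assumed here). The correct argument, and the one the paper uses, is to iterate \autoref{proj-cov}\eqref{i:free-epi}---which only needs $\supp(M)$ Artinian---to produce \emph{some} free resolution $\cx{F}$ of $M$ with $\supp(F_n)\subseteq\upset\supp(M)$, and then use that the minimal projective resolution $\cx{P}$ is a direct summand of $\cx{F}$, hence $\supp(P_n)\subseteq\supp(F_n)$. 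Once that point is repaired, the rest of your argument (including the exact-sequence comparison yielding $\ind_\varrho\res_\varrho M\cong M$, which the paper leaves implicit) is correct and matches the paper's proof.
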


\begin{proof}
\eqref{i:ind-res}
Since restriction is an exact functor, $\MOR{\res_{\varrho}\varepsilon}{\res_{\varrho}\cx{P}}{\res_{\varrho}M}$ is a resolution in $\modules{\gf\cat'}$.
Moreover, notice that $\supp(P_n)\subseteq\upset\supp(M)$ for each~$n\ge0$, since by \autoref{proj-cov}\eqref{i:free-epi} the $\gf\cat$-module
$M$ has a free resolution with the same property.
Therefore, the assumptions that $\upset\supp(M)$ is Artinian and $\minsupp(P_n)\subseteq\cla[\cat']$ for each~$n\ge0$ allow us to apply \autoref{minsupp-res-ind} and conclude that $\res_{\varrho}P_n$ is a projective $\gf\cat'$-module, and so $\res_{\varrho}\varepsilon$ is a projective resolution.
Moreover, the counit of the adjunction is an isomorphism $\cx{P}\FROM[\;\smash{\cong}]\ind_{\varrho}\res_{\varrho}\cx{P}$, and hence also $M\cong\ind_{\varrho}\res_{\varrho}M$.

\eqref{i:ind}
By part~\eqref{i:ind-res},
$\MOR{\res_\varrho\varepsilon}{\res_\varrho\cx{P}}{\res_\varrho M}$
is a projective resolution in $\modules{\gf\cat'}$, hence is chain
homotopy equivalent to $\MOR{\varepsilon'}{\cx{P'}}{\res_\varrho M}$.
Therefore $\ind_\varrho\cx{P'}$ is chain homotopy equivalent to
$\ind_\varrho\res_\varrho\cx{P}\cong\cx{P}$. Since induction sends
projectives to projectives, the result follows.
\end{proof}

% \begin{corollary}
% \label{ind-lifts-resolution}
% Let\/ $\cat$ be a small EI~category.
% Let $\MOR{\varepsilon}{\cx{P}}{M}$ be a  minimal
% projective resolution of a $\gf\cat$-module~$M$
% with\/ $\upset\supp(M)$ Artinian, let\/
% $\cat'$ be a full subcategory of\/ $\cat$ such that
% $\minsupp(\cx{P})\subseteq\cla[\cat']$,
% and let $\MOR{\varrho}{\cat'}{\cat}$
% be the inclusion functor.
% Let $\MOR{\varepsilon'}{\cx{P'}}{\res_\varrho M}$ be a
% projective resolution in\/ $\modules{\gf\cat'}$.

% Then\/ $\ind_\varrho\cx{P'}$ is a projective resolution of\/ $M$
% in\/ $\modules{\gf\cat}$.
% %and the counit of the adjunction gives isomorphisms
% %$\ind_\varrho\res_\varrho M \cong M$ and\/
% %$
% %\ind_\varrho\res_\varrho \cx{P}
% %\cong
% %\cx{P}
% %\,.
% %$
% \end{corollary}

\begin{proof}[Proof of \autoref{main-technical}]
Let $\MOR{\varrho}{\cat'}{\cat}$ and $\MOR{\varsigma}{\catwo'}{\catwo}$ be the inclusion functors.
By \autoref{min-projres-ind-res}\eqref{i:ind-res},
%Since restriction is an exact functor,
$\MOR{\res_{\varrho}\varepsilon}{\res_{\varrho}\cx{P}}{\res_{\varrho}M}$ is a projective resolution in $\modules{\gf\cat'}$,
%Moreover, notice that $\supp(P_n)\subseteq\supp(M)$ for each~$n\ge0$, since
%$M$ has a free resolution with the same property.
%Therefore, the assumptions that $\supp(M)$ is Artinian and that
%$\minsupp(P_n)\subseteq\cla[\cat']$ for each~$n\ge0$ allow us to apply
%\autoref{minsupp-res-ind} and conclude that $\res_{\varrho}P_n$ is a
%projective $\gf\cat'$-module, and so $\res_{\varrho}\varepsilon$ is a
%projective resolution.
%Moreover, the counit of the adjunction is an isomorphism
and we have isomorphisms
$\cx{P}\FROM[\;\smash{\cong}]\ind_{\varrho}\res_{\varrho}\cx{P}$ and
%hence also
$M\cong\ind_{\varrho}\res_{\varrho}M$.

The same applies to~$\MOR{\delta}{\cx{Q}}{N}$ and, since $\res_\alpha$ is an equivalence of abelian categories, we get that $\MOR{\res_{\alpha\varsigma}\delta}{\res_{\alpha\varsigma}\cx{Q}}{\res_{\alpha\varsigma}N}$ is a projective resolution in $\modules{\gf\cat'}$.
The isomorphism
$\MOR{f}{\res_{\varrho}M}{\res_{\alpha\varsigma}N}$
therefore lifts to a homotopy equivalence
$\MOR{\cx{f}}{\res_{\varrho}\cx{P}}{\res_{\alpha\varsigma}\cx{Q}}$,
and so using induction we get a homotopy equivalence % of chain complexes in $\modules{\gf\cat}$
$\MOR{\ind_{\varrho}\cx{f}}{\ind_{\varrho}\res_{\varrho}\cx{P}}{\ind_{\varrho}\res_{\alpha\varsigma}\cx{Q}}$.

Since the equivalence of categories~$\alpha$ induces a bijection $\cla[\cat']\cong\cla[\catwo']$, \autoref{min-projres-ind-res}\eqref{i:ind} yields that
%we conclude that each $\ind_{\varrho}\res_{\varsigma\alpha}Q_n$ is a %projective $\gf\cat$-module, and so
$\ind_{\varrho}\res_{\alpha\varsigma}\cx{Q}$ is another projective resolution of~$M$.
But $\cx{P}$ is a minimal projective resolution of~$M$, and so $\ind_{\varrho}\cx{f}$ induces an isomorphism of chain complexes $\cx{P}\oplus\cx{P'}\cong\ind_{\varrho}\res_{\alpha\varsigma}\cx{Q}$ for some contractible subcomplex $\cx{P'}\leq\ind_{\varrho}\res_{\alpha\varsigma}\cx{Q}$.
It follows that $\cx{Q}\cong\ind_{\alpha\varsigma}\res_{\varrho}\ind_{\varrho}\res_{\alpha\varsigma}\cx{Q}\cong\ind_{\alpha\varsigma}\res_{\varrho}\cx{P}\oplus\ind_{\alpha\varsigma}\res_{\varrho}\cx{P'}$, and notice that $\ind_{\alpha\varsigma}\res_{\varrho}\cx{P'}$ is again contractible.
Since $\cx{Q}$ is a minimal projective resolution of~$N$, we conclude that $\ind_{\alpha\varsigma}\res_{\varrho}\cx{P'}\cong 0$, which forces $\cx{P'}\cong 0$, thus %completing the proof of the isomorphism
$
\cx{P}\cong\ind_{\varrho}\res_{\alpha\varsigma}\cx{Q}.
$

%Finally,
The claim that $\alpha$ induces an equivalence
between $\Betti(M)$ and $\Betti(N)$ is now clear since
we have
$
\minsupp(\cx{P}) =
\minsupp(\ind_\varrho\res_{\alpha\varsigma}\cx Q) =
\minsupp(\res_{\alpha\varsigma}\cx Q) \cong
\minsupp(\res_\varsigma\cx Q) =
\minsupp(\cx Q).
$
\end{proof}

%%%%%%%%%%%%%%%%%%%%%%%%%%%%%%%%%%%%%%%%%%%%%%%%%%%%%%%%%%%%%%%%%%%%%%%%%%%%%%

\section{Toric rings}
\label{TORIC}

We now apply the theory developed so far to study homological properties of semigroup rings of the form $\gf[Q]$, where $\gf$ is a field and $Q$ is a \emph{pointed affine semigroup}.
These rings are called \emph{toric rings} because they arise, for example, as affine coordinate rings of toric varieties, and have been studied extensively in geometry, combinatorics, and algebra;
e.g., see \citelist{\cite{Fulton} \cite{Bruns-Gubeladze} \cite{Miller-Sturmfels} \cite{Peeva} \cite{Villarreal}} and the references there.

Recall that a finitely generated abelian semigroup $Q$ is \emph{affine} if it is isomorphic to a subsemigroup of $\IZ^r$ for some positive integer $r$.
The smallest such $r$ is called the \emph{dimension} of $Q$.
The affine semigroup $Q$ is \emph{pointed} if the only invertible element is the identity.
An $r$-dimensional affine pointed semigroup $Q$ has a canonical minimal generating set
$\CM=\{a_1,\dots, a_n\}$, and can be embedded as a subsemigroup of $\IN^r$;
see~\cite{Miller-Sturmfels}*{Corollary 7.23 on page 140}.
% (specifying such an embedding amounts to specifying a torus action on $\gf[Q]$, hence the toric terminology).
Because of this we will always consider the minimal generators $a_i$ as non-zero elements in~$\IN^r$; in particular, no $a_j$ can be expressed as a linear combination of the remaining $a_i$s with nonnegative integer coefficients, and the $(r\times n)$-matrix $A=(a_1\ \dotsb\ a_n)$ with columns the $a_i$s has rank~$r$.
The matrix~$A$ gives a homomorphism of polynomial rings
\[
\MOR{\varphi_A}{\gf[x_1,\dotsc,x_n]}{\gf[t_1,\dotsc,t_r]}
\,,\qquad
x_i\mapsto t_{\vphantom{1}}^{a_{i\vphantom{1}}}=t_1^{a_{i1}}\dotsb t_r^{a_{ir}}
.
\]
with image exactly $\gf[Q]$.
Let $R=\gf[x_1,\dotsc,x_n]$.
Consider the $\IZ^r$-grading on~$R$ given by $\deg x_i=a_i$, and consider the standard $\IZ^r$-grading on $\gf[t_1,\dotsc,t_r]$, i.e., $\deg t_i=e_i$.
Then $\varphi_A$ is graded.
Notice that, while the map $\varphi_A$ and the $\IZ^r$-grading on $R$ depend on the choice of $A$, the ideal $\ker\varphi_A$ depends only on the semigroup~$Q$.
The ideal~$\ker\varphi_A$ is called the \emph{toric ideal} associated with~$Q$ and is denoted~$I_Q$.
With respect to the~$\IZ^r$-grading on $R$ defined above, both $I_Q$ and~$\gf[Q]\cong R/I_Q$ are $\IZ^r$-graded $R$-modules.

Thinking of the matrix~$A$ as a monoid homomorphism $\MOR{A}{\IN^n}{\IZ^r}$, we can form the action category~$\act{\IN^n}{\IZ^r}$ (see \autoref{act-cat-def} and \autoref{act-cat-examples}), and the category of $\IZ^r$-graded $R$-modules is equivalent to the category of $\gf[\act{\IN^n}{\IZ^r}]$-modules by \autoref{mod-over-act-cat}.
As in \autoref{mod-over-act-cat-2} and \autoref{graded-modules-example}, we identify the elements of $\IN^n$ with the corresponding monomials in $R$.
In particular, given $c,d\in\IZ^r=\obj\act{\IN^n}{\IZ^r}$ we have that $\mor_{\act{\IN^n}{\IZ^r}}(c,d)$ is the set of all monomials in $R$ of $\IZ^r$-degree $d-c$, which in general could contain more than one element, and so the category $\act{\IN^n}{\IZ^r}$ is in general not a preorder.
However, the assumptions on the matrix~$A$ imply that all endomorphisms and all isomorphisms in the category $\act{\IN^n}{\IZ^r}$ are identities, and so in particular we have an EI~category, and it is easy to check that for the induced poset structure on $\cla[\act{\IN^n}{\IZ^r}]=\obj\act{\IN^n}{\IZ^r}=\IZ^r$ every finitely generated upper set is Artinian.
Since the semigroup ring $\gf[\IN^n]=R$ is Noetherian, this action category is Noetherian by \autoref{mod-over-act-cat}.
Moreover, each group ring $\gf[c]$ is just $\gf$, hence perfect, and projective $\gf[c]$-modules are free, thus all finitely generated $\gf[\act{\IN^n}{\IZ^r}]$-modules have a minimal free resolution by \autoref{min-projres-global}, and therefore a well defined Betti category and well defined Betti numbers.

We now think of the toric ideal $I_Q$ and the toric ring $\gf[Q]$ also as functors $\act{\IN^n}{\IZ^r}\TO\modules{\gf}$.
Notice that
\[
\supp(\gf[Q])=\im(\MOR{A}{\IN^n}{\IZ^r})=Q = \upset 0
\,.
\]
Moreover, if $\cat$ is any subcategory of~$\act{\IN^n}{\IZ^r}$ with $\obj\cat\subseteq\supp(\gf[Q])=Q$, then
\[
\res_{\cat}\gf[Q] = \const_\gf ,
\]
i.e., the restriction of~$\gf[Q]$ to~$\cat$ is the constant functor~$\gf$.
This applies in particular when~$\cat$ is the Betti category~$\Betti(\gf[Q])$.
Notice that under the equivalence of \autoref{mod-over-act-cat} the objects of $\Betti(\gf[Q])$ are precisely the degrees of the basis elements of the free $\IZ^r$-graded $R$-modules in the minimal $\IZ^r$-graded free resolution of the $\IZ^r$-graded $R$-module~$\gf[Q]$.

\begin{example}[twisted cubic]
\label{twisted-cubic}
Consider the $(2\times 4)$-matrix
\[
A=
\begin{pmatrix}
3&2&1&0\\0&1&2&3
\end{pmatrix}
.
\]
The semigroup $Q$ is the subsemigroup of $\IZ^2$ generated by the columns
of $A$.
The associated toric ideal in~$R=\gf[a,b,c,d]$ is
\[
I_Q=\langle\,
ac-b^2, bc-ad, bd-c^2
\,\rangle
\,,
\]
the defining ideal of the twisted cubic curve in~$\IP^3$;
compare the ``Running Example'' in~\cite{Peeva}*{Chapter~IV}.
The $\IZ^2$-graded minimal free resolution of~$\gf[Q]$ over~$R$ is
\[
0
\TO
R^2
\xrightarrow{\begin{psmallmatrix}
d && c \\ c && b \\ b && a
\end{psmallmatrix}}
R^3
\xrightarrow{\begin{psmallmatrix}
ac-b^2 && bc-ad && bd-c^2
\end{psmallmatrix}}
R
\TO
\gf[Q]
\TO
0
\,.
\]
The $\IZ^2$-degrees of the basis elements of the free modules in this resolution are $(0,0), (2,4), (3,3), (4,2), (4,5)$, and $(5,4)$.
Thus the Betti category of~$\gf[Q]$ is the following full subcategory of~$\act{\IN^2}{\IZ^4}$.
\[
\colorlet{colorX}{black!66}
\colorlet{colorY}{pink}
\colorlet{colorZ}{cyan}
\begin{tikzcd}[column sep=110pt, row sep=25pt, crossing over clearance=15pt]
&
\mathclap{\raisebox{10pt}{$\scriptstyle(4,2)$}}
\mathclap{\bullet}
\arrow[rd,                   -, line width=4pt, shift  left=2pt, shorten >=2.7pt, colorX]
\arrow[rd,                   -, line width=4pt, shift right=2pt, shorten >=2.7pt, colorY]
\arrow[rd,                   "c"            description]
\arrow[rddd,                 -, line width=4pt, shift  left=2pt, shorten >=2.7pt, colorX]
\arrow[rddd,                 -, line width=4pt, shift right=2pt, shorten >=2.7pt, colorZ, dash pattern=on 4pt off 4pt]
\arrow[rddd,                 "d"            description, pos=.25]
\\
&&
\bullet
\mathrlap{\ \scriptstyle(5,4)}
\\
\mathllap{\scriptstyle(0,0)\ }
\bullet
\arrow[ruu, bend  left=15pt, -, line width=8pt,                  shorten >=2.7pt, colorX]
\arrow[ruu, bend  left=15pt, "ac"           description]
\arrow[ruu, bend right=15pt, -, line width=4pt, shift left =2pt, shorten >=2.7pt, colorY]
\arrow[ruu, bend right=15pt, -, line width=4pt, shift right=2pt, shorten >=2.7pt, colorZ, dash pattern=on 4pt off 4pt]
\arrow[ruu, bend right=15pt, "b^2"          description]
\arrow[r,   bend  left=15pt, -, line width=8pt,                  shorten >=2.7pt, colorY]
\arrow[r,   bend  left=15pt, "bc"           description]
\arrow[r,   bend right=15pt, -, line width=4pt, shift left =2pt, shorten >=2.7pt, colorZ, dash pattern=on 4pt off 4pt]
\arrow[r,   bend right=15pt, -, line width=4pt, shift right=2pt, shorten >=2.7pt, colorX]
\arrow[r,   bend right=15pt, "ad"           description]
\arrow[rdd, bend  left=15pt, -, line width=8pt,                  shorten >=2.7pt, colorZ, dash pattern=on 4pt off 4pt]
\arrow[rdd, bend  left=15pt, "bd"           description]
\arrow[rdd, bend right=15pt, -, line width=4pt, shift left =2pt, shorten >=2.7pt, colorX]
\arrow[rdd, bend right=15pt, -, line width=4pt, shift right=2pt, shorten >=2.7pt, colorY]
\arrow[rdd, bend right=15pt, "c^2"          description]
&
\smash{\mathclap{\raisebox{12pt}{$\scriptstyle(3,3)$}}}
\mathclap{\bullet}
\arrow[ru,          phantom, crossing over, shorten >=16pt, shorten <=16pt]
\arrow[ru,                   -, line width=4pt, shift  left=2pt, shorten >=2.7pt, colorZ, dash pattern=on 4pt off 4pt]
\arrow[ru,                   -, line width=4pt, shift right=2pt, shorten >=2.7pt, colorY]
\arrow[ru,                   "b"            description, pos=.25]
\arrow[rd,                   -, line width=4pt, shift  left=2pt, shorten >=2.7pt, colorY]
\arrow[rd,                   -, line width=4pt, shift right=2pt, shorten >=2.7pt, colorX]
\arrow[rd,                   "c"            description, pos=.25]
\\
&&
\bullet
\mathrlap{\ \scriptstyle(4,5)}
\\
&
\mathclap{\raisebox{-10pt}{$\scriptstyle(2,4)$}}
\mathclap{\bullet}
\arrow[ruuu,        phantom, crossing over, shorten >=15pt]
\arrow[ruuu,                 -, line width=4pt, shift  left=2pt, shorten >=2.7pt, colorZ, dash pattern=on 4pt off 4pt]
\arrow[ruuu,                 -, line width=4pt, shift right=2pt, shorten >=2.7pt, colorX]
\arrow[ruuu,                 "a"            description, pos=.25]
\arrow[ru,                   -, line width=4pt, shift  left=2pt, shorten >=2.7pt, colorZ, dash pattern=on 4pt off 4pt]
\arrow[ru,                   -, line width=4pt, shift right=2pt, shorten >=2.7pt, colorY]
\arrow[ru,                   "b"            description]
\end{tikzcd}
\]
The colors are meant to indicate which ``squares'' commute:
two compositions of two morphisms from $(0,0)$ to either~$(5,4)$ or~$(4,5)$ are equal if and only if they are labeled with the same color.
For example, the ``black'' morphisms $(0,0)\TO(5,4)$ given by $c\circ ac$ and $a\circ c^2$ are equal.
\end{example}

Since there are formulas for the $\IZ^r$-graded Betti numbers of the $R$-module $\gf[Q]$ purely in terms of the topological combinatorics of $Q$
(see e.g.~\cite{Miller-Sturmfels}*{Theorem 9.2 on page 192} or \cite{Peeva}*{Chapter IV, Section 67}),
the Betti category of $\gf[Q]$ can be determined without computing the minimal free resolution. This allows us to use it and produce a new canonical (but not minimal) free resolution of $\gf[Q]$.

\begin{theorem}
\label{main-theorem-1}
Let $\gf$ be a field, and let $Q$ be a pointed affine semigroup of rank $r$, with a given embedding into $\IN^r$ and with canonical minimal generators $a_1,\dotsc, a_n$.
Consider $\gf[Q]$ as a $\IZ^r$-graded module over the $\IZ^r$-graded polynomial ring $R=\gf[x_1,\dots, x_n]$, and as a module over the corresponding action category $\act{\IN^n}{\IZ^r}$.
Consider the corresponding Betti category~$\Betti(\gf[Q])$, and let $\MOR{\varrho}{\Betti(\gf[Q])}{\act{\IN^n}{\IZ^r}}$ be the inclusion functor.
Let $\cx{B}(\const_\gf)$ be the normalized bar resolution (\autoref{normalized-bar}) of the constant\/ $\gf\Betti(\gf[Q])$-module.

Then $\ind_\varrho \cx{B}(\const_\gf)$ is a free resolution of\/~$\gf[Q]$ as a module over $\act{\IN^n}{\IZ^r}$.
In particular, applying the equivalence of \autoref{mod-over-act-cat} produces a canonical finite free $\IZ^r$-graded resolution $\cx{F}(\gf[Q])$ of the $\IZ^r$-graded $R$-module\/~$\gf[Q]$.
\end{theorem}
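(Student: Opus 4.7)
The plan is to invoke \autoref{min-projres-ind-res}\eqref{i:ind} with $\cat = \act{\IN^n}{\IZ^r}$, $\cat' = \Betti(\gf[Q])$, $M = \gf[Q]$, and $\cx{P'} = \cx{B}(\const_\gf)$. Three preliminary observations fit the data to the hypotheses. First, $\supp(\gf[Q]) = Q$ is a finitely generated upper set in $\IZ^r$ (generated, e.g., by $\{0\}$), hence Artinian by the setup recorded at the start of \autoref{TORIC}. Second, $\minsupp(\cx{P}) = \cla[\Betti(\gf[Q])]$ is automatic from the very definition of the Betti category, where $\cx{P}$ denotes the minimal free resolution of $\gf[Q]$ over the action category. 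Third, as already observed just before the theorem, $\res_\varrho \gf[Q] = \const_\gf$, because $\gf[Q]$ takes the value $\gf$ on every object of its Betti category.

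With these in hand, I would appeal to the bar resolution proposition following \autoref{normalized-bar} to conclude that $\cx{B}(\const_\gf)$ is a resolution of $\const_\gf$. The remark after that proposition additionally gives that each component is free, since $\const_\gf$ is object-wise free ($\const_\gf(c) = \gf$ is trivially a free $\gf$-module). Plugging this into \autoref{min-projres-ind-res}\eqref{i:ind} produces a projective resolution $\ind_\varrho \cx{B}(\const_\gf)$ of $\gf[Q]$; since induction preserves free modules (\autoref{IND-RES}), it is in fact a \emph{free} resolution.

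The last step is to confirm finiteness and then translate to the $\IZ^r$-graded $R$-module setting. Hilbert's Syzygy Theorem (equivalently, finite global dimension of $R$ combined with \autoref{min-projres-global}) guarantees that $\gf[Q]$ has a minimal free resolution of finite length with finitely generated components, so $\Betti(\gf[Q])$ has only finitely many objects. Since $\act{\IN^n}{\IZ^r}$ has no non-identity endomorphisms, every chain of non-identity composable morphisms in $\Betti(\gf[Q])$ is strictly increasing in the poset $\IZ^r$ and hence of bounded length, forcing the normalized bar complex to be bounded. Moreover each hom-set consists of finitely many monomials, so each component of $\cx{B}(\const_\gf)$ is a finite direct sum of free modules on one generator, and these properties pass through induction. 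The equivalence of \autoref{mod-over-act-cat} then converts the result into the desired canonical finite free $\IZ^r$-graded resolution $\cx{F}(\gf[Q])$.

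The main obstacle I anticipate is purely bookkeeping: upgrading the conclusion of \autoref{min-projres-ind-res}\eqref{i:ind} from ``projective'' to ``free'' and confirming that $\Betti(\gf[Q])$ is genuinely finite before the argument begins. Both reduce to results already in place, so no new ideas are required beyond \autoref{min-projres-ind-res} and the explicit structure of the normalized bar complex.
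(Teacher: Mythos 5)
Your proposal is correct and follows exactly the paper's own argument, which is a single sentence: since $\const_\gf=\res_\varrho\gf[Q]$, apply \autoref{min-projres-ind-res}\eqref{i:ind}. The extra bookkeeping you supply (verifying the Artinian hypothesis, upgrading ``projective'' to ``free'' via the fact that induction preserves free modules, and confirming finiteness of $\Betti(\gf[Q])$ from Hilbert's Syzygy Theorem) is a legitimate and useful elaboration of details the paper leaves implicit, but it does not change the route.
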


\begin{proof}
Since $\const_\gf=\res_\varrho\gf[Q]$, the chain complex $\ind_\varrho\cx{B}(\const_\gf)$ is a free resolution of $\gf[Q]$ by \autoref{min-projres-ind-res}\eqref{i:ind}.
\end{proof}

\begin{example}
\label{twisted-cubic-bar}
Consider the twisted cubic curve from \autoref{twisted-cubic}.
The nerve of
that Betti category has $12$ non-degenerate $2$-faces,
$18$ non-degenerate $1$-faces, and $6$ (nondegenerate) $0$-faces.
Applying the equivalence of categories from \autoref{mod-over-act-cat}
to the resolution $\ind_\varrho\cx{B}(\const_\gf)$ produces the resolution
\[
0\TO R^{12}\TO[D] R^{18}\TO[E] R^{6}\TO0
\]
of the $\IZ^2$-graded $R$-module $\gf[Q]$, where
\begin{align*}
D&=
\begin{psmallmatrix}
 b & 0 & 0 & 0 & 0 & 0 & a & 0 & 0 & 0 & 0 & 0  \\
 0 & b & 0 & 0 & 0 & 0 & 0 & a & 0 & 0 & 0 & 0  \\
 0 & 0 & c & 0 & 0 & 0 & 0 & 0 & b & 0 & 0 & 0  \\
 0 & 0 & 0 & c & 0 & 0 & 0 & 0 & 0 & b & 0 & 0  \\
 0 & 0 & 0 & 0 & d & 0 & 0 & 0 & 0 & 0 & c & 0  \\
 0 & 0 & 0 & 0 & 0 & d & 0 & 0 & 0 & 0 & 0 & c  \\
 1 & 1 & 0 & 0 & 0 & 0 & 0 & 0 & 0 & 0 & 0 & 0  \\
 0 & 0 & 1 & 1 & 0 & 0 & 0 & 0 & 0 & 0 & 0 & 0  \\
 0 & 0 & 0 & 0 & 1 & 1 & 0 & 0 & 0 & 0 & 0 & 0  \\
-1 & 0 &-1 & 0 & 0 & 0 & 0 & 0 & 0 & 0 & 0 & 0  \\
 0 &-1 & 0 & 0 &-1 & 0 & 0 & 0 & 0 & 0 & 0 & 0  \\
 0 & 0 & 0 &-1 & 0 &-1 & 0 & 0 & 0 & 0 & 0 & 0  \\
 0 & 0 & 0 & 0 & 0 & 0 & 1 & 1 & 0 & 0 & 0 & 0  \\
 0 & 0 & 0 & 0 & 0 & 0 & 0 & 0 & 1 & 1 & 0 & 0  \\
 0 & 0 & 0 & 0 & 0 & 0 & 0 & 0 & 0 & 0 & 1 & 1  \\
 0 & 0 & 0 & 0 & 0 & 0 & 0 & 0 &-1 & 0 &-1 & 0  \\
 0 & 0 & 0 & 0 & 0 & 0 &-1 & 0 & 0 & 0 & 0 &-1  \\
 0 & 0 & 0 & 0 & 0 & 0 & 0 &-1 & 0 &-1 & 0 & 0
\end{psmallmatrix},
\\
E&=
\begin{psmallmatrix}
-c^2 & -bd & -bc & -ad & -b^2  & -ac   &
         0 & 0   & 0   & -bc^2 & -b^2d & -acd &
         0 & 0   & 0   & -b^2c & -ac^2 & -abd     \\
  1  &  1  & 0   & 0   &  0    &  0   &
       -b  & 0   & 0   &  0    &  0   &  0    &
       -a  & 0   & 0   &  0    &  0   &  0                  \\
 0   &  0  & 1   & 1   &  0    &  0   &
        0  & -c  & 0   &  0    &  0   &  0    &
        0  & -b  & 0   &  0    &  0   &  0                  \\
 0   &  0  &  0  & 0   &  1    &  1   &
        0  &  0  & -d  &  0    &  0   &  0    &
        0  &  0  & -c  &  0    &  0   &  0                  \\
 0   &  0  &  0  &  0  &  0    &  0   &
        1  &  1  &  1  &  1    &  1   &  1    &
        0  &  0  &  0  &  0    &  0   &  0                  \\
 0   &  0  &  0  &  0  &  0    &  0   &
        0  &  0  &  0  &  0    &  0   &  0    &
        1  &  1  &  1  &  1    &  1   &  1
\end{psmallmatrix}.
\end{align*}
\end{example}

\begin{remark}
The resolution $\cx{F}(\gf[Q])$ from \autoref{main-theorem-1}
differs from the hull resolution
of $\gf[Q]$ introduced in \cite{Bayer-Sturmfels}. For instance,
the hull resolution of the twisted cubic curve $\gf[Q]$ from
\autoref{twisted-cubic} is a minimal free resolution,
see \cite{Miller-Sturmfels}*{Exercise 9.3 on page 189},
while \autoref{twisted-cubic-bar} shows that $\cx{F}(\gf[Q])$ is
far from minimal. The precise relationship between these two
constructions of canonical
finite free resolutions is not clear at this point.
\end{remark}

Not only does the Betti category of a toric ring determine a canonical free resolution of the ring, but in fact, the equivalence class of the Betti category determines completely the structure of the minimal free resolution of the toric ring in the following sense.

\begin{theorem}\label{main-theorem-2}
Let\/ $\gf$ be a field.
Let $Q$ and $Q'$ be two pointed affine semigroups of ranks $r$ and $r'$ and with Betti categories $\Betti$ and $\Betti'$, respectively.
Denote by
\[
\MOR{\varrho}{\Betti}{\act{\IN^n}{\IZ^r}}
\AND
\MOR{\varrho'}{\Betti'}{\act{\IN^{n'}}{\IZ^{r'}}}
\]
the inclusion functors in the corresponding action categories.
Assume that there is an equivalence of categories
\[
\MOR{\alpha}{\Betti}{\Betti'}
\,,
\]
and let $\cx{P}'$ be a minimal free resolution of the toric ring\/ $\gf[Q']$.
Then the chain complex \/ $\ind_{\varrho}\res_{\alpha\varrho}\cx{P}'$ is a minimal free resolution of the toric ring\/ $\gf[Q]$.
\end{theorem}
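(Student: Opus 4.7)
The plan is to deduce the statement as a direct corollary of \autoref{main-technical}, interpreting the second ``$\varrho$'' in the displayed chain complex as $\varrho'$, so that the composition $\alpha\varrho'\colon\Betti\to\act{\IN^{n'}}{\IZ^{r'}}$ is well defined. I would set $\cat=\act{\IN^n}{\IZ^r}$, $\catwo=\act{\IN^{n'}}{\IZ^{r'}}$, $M=\gf[Q]$, $N=\gf[Q']$, $\cat'=\Betti$, and $\catwo'=\Betti'$, with the given inclusions $\varrho$ and $\varrho'$ playing the roles of $\varrho$ and $\varsigma$ in \autoref{main-technical}. Let $\cx{P}$ be a minimal free resolution of $\gf[Q]$. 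The conclusion of \autoref{main-technical} then reads $\cx{P}\cong\ind_\varrho\res_{\alpha\varrho'}\cx{P}'$, which is exactly what is needed: the right-hand side is isomorphic to the minimal free resolution $\cx{P}$, hence is itself a minimal free resolution of $\gf[Q]$.

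Next I would run through the hypotheses of \autoref{main-technical}. Existence of the minimal free resolutions $\cx{P}$ and $\cx{P}'$ follows from the discussion at the start of \autoref{TORIC}: each group ring $\gf[c]$ in an action category $\act{\IN^m}{\IZ^s}$ equals the field $\gf$ and hence is semisimple with all projective modules free, every finitely generated upper set in $\IZ^s$ is Artinian, and so \autoref{min-projres-global} yields minimal free resolutions for every finitely generated module. The Artinian upper-set hypotheses are satisfied because $\upset\supp(\gf[Q])=\upset Q=Q=\upset 0$ is finitely generated (by $0$), and analogously for $Q'$. The conditions $\minsupp(\cx{P})\subseteq\cla[\Betti]$ and $\minsupp(\cx{P}')\subseteq\cla[\Betti']$ are immediate from the definition of the Betti category, and the equivalence $\alpha$ is given by hypothesis.

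The only remaining ingredient is the isomorphism $\MOR[\cong]{f}{\res_\varrho\gf[Q]}{\res_{\alpha\varrho'}\gf[Q']}$ of $\gf\Betti$-modules. For this I would invoke the observation made in \autoref{TORIC} right before \autoref{twisted-cubic}: for any subcategory of $\act{\IN^n}{\IZ^r}$ whose objects lie in $\supp(\gf[Q])=Q$, the restriction of $\gf[Q]$ equals $\const_\gf$. Since $\obj\Betti\subseteq Q$, this gives $\res_\varrho\gf[Q]=\const_\gf$, and the analogous argument applied to $Q'$ gives $\res_{\varrho'}\gf[Q']=\const_\gf$ as a $\gf\Betti'$-module; precomposition with the functor $\alpha$ clearly preserves the constant functor, so $\res_{\alpha\varrho'}\gf[Q']=\const_\gf$ as well. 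Thus we may take $f$ to be the identity natural transformation.

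With all the hypotheses verified, \autoref{main-technical} yields the desired isomorphism $\cx{P}\cong\ind_\varrho\res_{\alpha\varrho'}\cx{P}'$ and the theorem follows. There is no significant obstacle in this argument: the real work has been packaged inside \autoref{main-technical}, and the only toric-specific input is the identification of the relevant restrictions with $\const_\gf$, which is essentially tautological.
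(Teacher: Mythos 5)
Your argument is correct and is exactly the unwinding of what the paper records as ``This is a special case of \autoref{main-technical}'': you correctly identify the typo ($\varrho'$ for the second $\varrho$), set up the right instance of \autoref{main-technical}, verify the Artinian and $\minsupp$-hypotheses using $\supp(\gf[Q])=\upset0$, and supply the isomorphism $f$ via the observation that the restrictions to the Betti categories are the constant functor $\const_\gf$. Nothing to add.
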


\begin{proof}
This is a special case of \autoref{main-technical}.
\end{proof}

% \begin{remark}
% \autoref{main-theorem-2} shows that the Betti category of a toric
% ring is a discrete combinatorial object that plays for toric rings
% the same role as the lcm-lattice \cite{GPW}
% plays for monomial ideals.
% %see \cite{GPW}.
% \end{remark}

\begin{example}[Segre threefold]
\label{generic-toric}
Let $Q$ be the subsemigroup of $\IN^4$ generated by the columns of the $(4\times 6)$-matrix
\[
A=
\begin{pmatrix}
1 & 0 & 0 & 1 & 0 & 0 \\
0 & 1 & 0 & 0 & 1 & 0 \\
0 & 0 & 1 & 0 & 0 & 1 \\
% 1 & 1 & 1 & 0 & 0 & 0 \\
0 & 0 & 0 & 1 & 1 & 1
\end{pmatrix}
.
\]
The associated toric ideal in~$R=\gf[x_{11},x_{12},x_{21},x_{22},x_{31},x_{32}]$ is the ideal
\[
I_Q=\langle\,
x_{11}x_{22}-x_{12}x_{21},\; x_{11}x_{32}-x_{12}x_{31},\; x_{21}x_{32}-x_{22}x_{31}
\,\rangle
\]
generated by the maximal minors of the generic matrix
\[
X=
\begin{pmatrix}
x_{11}&x_{12}
\\
x_{21}&x_{22}
\\
x_{31}&x_{32}
\end{pmatrix}
.
\]
This is the defining ideal of the Segre embedding of~$\IP^2\times\IP^1$ in~$\IP^5$; compare~\cite{Peeva}*{Example~66.10 on pages~264--265}.
It is a \emph{generic determinantal ideal}, in the sense that it specializes via a ring homomorphism from $R$ to any other ideal of maximal minors of a $(3\times 2)$-matrix.
The $\IZ^4$-graded minimal free resolution of~$R/I_Q$ over~$R$ is
\[
0
\TO
R^2
\TO[X]
R^3
\xrightarrow{(\,
x_{21}x_{32}-x_{22}x_{31}
\ \;
x_{12}x_{31}-x_{11}x_{32}
\ \;
x_{11}x_{22}-x_{12}x_{21}
)}
R
\TO
R/I_Q
\TO
0
\,.
\]
The corresponding Betti category is therefore the following.

\[
\colorlet{colorX}{black!66}
\colorlet{colorY}{pink}
\colorlet{colorZ}{cyan}
\begin{tikzcd}[column sep=110pt, row sep=25pt, crossing over clearance=15pt]
&
\mathclap{\raisebox{10pt}{$\scriptstyle(0,1,1,1)$}}
\mathclap{\bullet}
\arrow[rd,                   -, line width=4pt, shift  left=2pt, shorten >=2.7pt, colorX]
\arrow[rd,                   -, line width=4pt, shift right=2pt, shorten >=2.7pt, colorY]
\arrow[rd,                   "x_{12}"       description]
\arrow[rddd,                 -, line width=4pt, shift  left=2pt, shorten >=2.7pt, colorX]
\arrow[rddd,                 -, line width=4pt, shift right=2pt, shorten >=2.7pt, colorZ, dash pattern=on 4pt off 4pt]
\arrow[rddd,                 "x_{11}"       description, pos=.25]
\\
&&
\bullet
\mathrlap{\ \scriptstyle(1,1,1,2)}
\\
\mathllap{\scriptstyle(0,0,0,0)\ }
\bullet
\arrow[ruu, bend  left=15pt, -, line width=8pt,                  shorten >=2.7pt, colorX]
\arrow[ruu, bend  left=15pt, "x_{21}x_{32}" description]
\arrow[ruu, bend right=15pt, -, line width=4pt, shift left =2pt, shorten >=2.7pt, colorY]
\arrow[ruu, bend right=15pt, -, line width=4pt, shift right=2pt, shorten >=2.7pt, colorZ, dash pattern=on 4pt off 4pt]
\arrow[ruu, bend right=15pt, "x_{22}x_{31}" description]
\arrow[r,   bend  left=15pt, -, line width=8pt,                  shorten >=2.7pt, colorY]
\arrow[r,   bend  left=15pt, "x_{12}x_{31}" description]
\arrow[r,   bend right=15pt, -, line width=4pt, shift left =2pt, shorten >=2.7pt, colorZ, dash pattern=on 4pt off 4pt]
\arrow[r,   bend right=15pt, -, line width=4pt, shift right=2pt, shorten >=2.7pt, colorX]
\arrow[r,   bend right=15pt, "x_{11}x_{32}" description]
\arrow[rdd, bend  left=15pt, -, line width=8pt,                  shorten >=2.7pt, colorZ, dash pattern=on 4pt off 4pt]
\arrow[rdd, bend  left=15pt, "x_{11}x_{22}" description]
\arrow[rdd, bend right=15pt, -, line width=4pt, shift left =2pt, shorten >=2.7pt, colorX]
\arrow[rdd, bend right=15pt, -, line width=4pt, shift right=2pt, shorten >=2.7pt, colorY]
\arrow[rdd, bend right=15pt, "x_{12}x_{21}" description]
&
\smash{\mathclap{\raisebox{15pt}{$\scriptstyle(1,0,1,1)$}}}
\mathclap{\bullet}
\arrow[ru,          phantom, crossing over, shorten >=16pt, shorten <=16pt]
\arrow[ru,                   -, line width=4pt, shift  left=2pt, shorten >=2.7pt, colorZ, dash pattern=on 4pt off 4pt]
\arrow[ru,                   -, line width=4pt, shift right=2pt, shorten >=2.7pt, colorY]
\arrow[ru,                   "x_{22}"       description, pos=.25]
\arrow[rd,                   -, line width=4pt, shift  left=2pt, shorten >=2.7pt, colorY]
\arrow[rd,                   -, line width=4pt, shift right=2pt, shorten >=2.7pt, colorX]
\arrow[rd,                   "x_{21}"       description, pos=.25]
\\
&&
\bullet
\mathrlap{\ \scriptstyle(1,1,1,1)}
\\
&
\mathclap{\raisebox{-10pt}{$\scriptstyle(1,1,0,1)$}}
\mathclap{\bullet}
\arrow[ruuu,        phantom, crossing over, shorten >=15pt]
\arrow[ruuu,                 -, line width=4pt, shift  left=2pt, shorten >=2.7pt, colorZ, dash pattern=on 4pt off 4pt]
\arrow[ruuu,                 -, line width=4pt, shift right=2pt, shorten >=2.7pt, colorX]
\arrow[ruuu,                 "x_{32}"       description, pos=.25]
\arrow[ru,                   -, line width=4pt, shift  left=2pt, shorten >=2.7pt, colorZ, dash pattern=on 4pt off 4pt]
\arrow[ru,                   -, line width=4pt, shift right=2pt, shorten >=2.7pt, colorY]
\arrow[ru,                   "x_{31}"       description]
\end{tikzcd}
\]
We see that it is equivalent (in fact isomorphic) to the Betti category
of the twisted cubic curve from \autoref{twisted-cubic}.
Therefore, by \autoref{main-theorem-2},
the minimal free resolution of the generic determinantal ideal
can be recovered
in a canonical way from the minimal free resolution of the twisted
cubic ideal, even though there is no map on the level of rings from
$\gf[a,b,c,d]$ to $R$ that can realize that ``specialization''.
In a sense, within the toric world, the defining ideal of the
twisted cubic curve is as generic
as the generic determinantal ideal.
\end{example}

In practice, in order to find the Betti category of a toric ring,
one has to either first compute its minimal free resolution, or
find out the Betti degrees using available formulas for the Betti numbers.
Either way, this involves computing syzygies. We now show how to
replace the Betti category with a slightly bigger canonical category
that can be
directly computed in practice without finding syzygies.

\begin{definition}
Let $Q$ be a pointed affine semigroup of dimension $r$, and let
$\CM=\{a_1,\dots, a_n\}$
be its canonical minimal generating set. Recall that $Q$ is a poset
with $a\le b$ if and only if there is a $c\in Q$ such that $a+c=b$.

(a) The \emph{degree} of a subset $I\subseteq\CM$ is
\[
\deg(I)=\sum_{a\in I} a
\,.
\]
% A set of subsets
% $\{I_1,\dots, I_k\}$ of $\CM$ is called \emph{irredundant} if there are no
% inclusions among the $I_j$s, and $\cap_{j=1}^k I_j=\emptyset$.
% The
% \emph{degree} of the collection $\{I_1,\dots, I_k\}$ is
% $\deg\{I_1,\dots, I_k\} = \sum_{j=1}^k \deg I_j$.

(b) The \emph{least upper bounds category} or \emph{lub-category} of $Q$
is the full subcategory~$\lubcat(Q)$ of the action category $\act{\IN^n}{\IZ^r}$ with
set of objects
\[
\obj\lubcat(Q)=\SET*{a\in Q}{\begin{array}{l}
\!\!\text{there is a set $\{I_1,\dotsc,I_k\}$ of subsets of~$\CM$ such that $a$ is}\!\!
\\[\smallskipamount]
\!\!\text{a least upper bound in $Q$ for the set $\{\deg I_1,\dotsc,\deg I_k\}$}\!\!
\end{array}}
.
\]
\end{definition}

Note that by definition $\lubcat(Q)$ is independent of the choice of the embedding of~$Q$ into $\IN^r$ and of the field $\gf$.

\begin{lemma}
\label{betti-in-lub}
The Betti category of\/ $\gf[Q]$ is a subcategory of $\lubcat(Q)$.
\end{lemma}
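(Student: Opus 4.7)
The plan is to invoke the classical multigraded Betti number formula for toric rings (see, e.g., \cite{Miller-Sturmfels}*{Theorem~9.2 on page~192} or \cite{Peeva}*{Chapter~IV, Section~67}),
\[
\beta_{i,a}(\gf[Q]) \;=\; \dim_\gf \tilde H_{i-1}(\Delta_a;\gf)
\,,
\]
where $\Delta_a=\SET{F\subseteq\CM}{a-\deg F\in Q}$ is the simplicial complex on~$\CM$ associated to the multidegree~$a$. Since each object $a$ of $\Betti(\gf[Q])$ has some $\beta_{i,a}(\gf[Q])>0$, the complex $\Delta_a$ has nonzero reduced homology in some degree. My candidate witness for $a\in\obj\lubcat(Q)$ will be the set $\{I_1,\dotsc,I_k\}$ of facets of~$\Delta_a$: by construction each $I_j\subseteq\CM$ satisfies $a-\deg I_j\in Q$, so $a$ is immediately an upper bound of $\{\deg I_j\}$ in~$Q$.

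The heart of the proof will be showing that $a$ is a \emph{least} upper bound, in the sense of a minimal element of the set of upper bounds in~$Q$. I would suppose $b\in Q$ satisfies $b\leq a$ and $\deg I_j\leq b$ in~$Q$ for every facet~$I_j$, and set $q=a-b\in Q$. If $q\neq 0$, writing $q=\sum c_i a_i$ with some $c_{i_0}>0$ gives $a_{i_0}\leq q$ in~$Q$, and transitivity of~$\leq$ in~$Q$ yields $a_{i_0}\leq a-\deg I_j$ in~$Q$ for every facet~$I_j$. For any facet $I_j$ with $i_0\notin I_j$ this is equivalent to $a-\deg(I_j\cup\{i_0\})\in Q$, i.e., to $I_j\cup\{i_0\}\in\Delta_a$, contradicting the maximality of~$I_j$. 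Hence $i_0$ must lie in every facet of~$\Delta_a$, which forces $\Delta_a$ to be a cone with apex~$\{i_0\}$: every face sits inside some facet, all of which contain~$i_0$, so adjoining~$i_0$ keeps it inside~$\Delta_a$. A cone is contractible, so $\tilde H_*(\Delta_a;\gf)=0$, contradicting $a\in\obj\Betti(\gf[Q])$. Therefore $q=0$, i.e., $b=a$, as required.

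The main obstacle will really just be setting up and correctly invoking the Betti number formula; after that, reducing an arbitrary common lower bound in~$Q$ down to one of the minimal generators~$a_{i_0}$ and recognizing the resulting cone structure on~$\Delta_a$ is routine combinatorics. Since both $\Betti(\gf[Q])$ and $\lubcat(Q)$ are full subcategories of the common ambient action category $\act{\IN^n}{\IZ^r}$, the containment on objects automatically upgrades to the claimed containment of subcategories.
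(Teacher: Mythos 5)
Your proof is correct and takes essentially the same approach as the paper: both invoke the Miller--Sturmfels formula $\beta_{i,a}(\gf[Q])=\dim_\gf\tilde{H}_{i-1}(\Delta_a;\gf)$, take the facets of $\Delta_a$ as the witnesses, and show that any strict upper bound $b<a$ of the facet degrees would force $\Delta_a$ to be a cone over some generator $a_{i_0}\le a-b$, contradicting the nonvanishing reduced homology.
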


\begin{proof}
Let $c$ be an object of the Betti category of $\gf[Q]$.
By \cite{Miller-Sturmfels}*{Theorem 9.2 on page 175}, the reduced homology of the simplicial complex
\[
\Delta_c=\{ I\subseteq \CM \mid \deg I \le c\}
\]
is nontrivial.
Let $\{I_1,\dots, I_k\}$ be the facets of $\Delta_c$.
It is enough to show that $c$ is a least upper bound in $Q$ for the set
$S=\{\deg I_1, \dots, \deg I_k\}$.
Suppose that $b< c$ and $b$ is an upper bound in $Q$ for~$S$.
Then we must have an $a\in\CM$ such that $b<b+a\le c$, and therefore $I_j\cup\{a\}$ is a face of $\Delta_c$ for each~$j$.
Thus $a\in I_j$ for each~$j$, and hence $\Delta_c$ is a cone.
This contradicts the fact that the reduced homology of~$\Delta_c$ is nontrivial.
\end{proof}

Using \autoref{betti-in-lub}, the following theorem is now an immediate consequence of
\autoref{main-technical}, and shows that the lub-category
is a finite combinatorial object that
plays in the toric world a role analogous to that of the
lcm-lattice \cite{GPW} in the world of monomial ideals.

\begin{theorem}
\label{main-theorem-3}
Let $Q$ and $Q'$ be pointed affine semigroups.
Assume that there is an equivalence $\MOR{\alpha}{\lubcat(Q)}{\lubcat(Q')}$
of their lub-categories.
Then, for every field\/ $\gf$, the Betti categories $\Betti(\gf[Q])$ and~$\Betti(\gf[Q'])$ are equivalent.
In particular, the minimal free resolution of\/ $\gf[Q]$ is obtained from the minimal free resolution of\/ $\gf[Q']$ by the functorial procedure described in \autoref{main-theorem-2}.
\end{theorem}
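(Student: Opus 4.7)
The plan is to deduce the statement directly from \autoref{main-technical} by using the lub-categories as the full subcategories $\cat'$ and $\catwo'$ appearing in its hypotheses, and then to obtain the ``In particular'' clause by invoking \autoref{main-theorem-2} for the resulting induced equivalence of Betti categories.

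More concretely, I would set $\cat=\act{\IN^n}{\IZ^r}$, $M=\gf[Q]$, $\cat'=\lubcat(Q)$, and symmetrically $\catwo=\act{\IN^{n'}}{\IZ^{r'}}$, $N=\gf[Q']$, $\catwo'=\lubcat(Q')$. The EI property, the Noetherianity of $\modules{\gf\cat}$, and the Artinianity of every finitely generated upper set in $\cla[\cat]$ were all noted at the beginning of \autoref{TORIC}, so minimal free resolutions $\cx{P}$ of $\gf[Q]$ and $\cx{Q}$ of $\gf[Q']$ exist by \autoref{min-projres-global}. Since $\act{\IN^n}{\IZ^r}$ is in fact a poset on $\IZ^r$, isomorphism classes are singletons, and the inclusion $\obj\Betti(\gf[Q])\subseteq\obj\lubcat(Q)$ furnished by \autoref{betti-in-lub} gives precisely $\minsupp(\cx{P})\subseteq\cla[\lubcat(Q)]$; the analogous containment for $\cx{Q}$ is identical.

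The one remaining hypothesis of \autoref{main-technical} to verify is the existence of an isomorphism $f$ between the appropriate restrictions. This is immediate from the observation in the paragraph preceding \autoref{twisted-cubic}: the restriction of $\gf[Q]$ to any subcategory of $\act{\IN^n}{\IZ^r}$ whose object set is contained in $\supp(\gf[Q])=Q$ equals the constant functor $\const_\gf$. Because $\obj\lubcat(Q)\subseteq Q$ by construction, this gives $\res_\varrho(\gf[Q])=\const_\gf$; similarly $\res_\varsigma(\gf[Q'])=\const_\gf$, and precomposing with $\alpha$ preserves the constant functor, so $\res_{\alpha\varsigma}(\gf[Q'])=\const_\gf$ on $\lubcat(Q)$. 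Hence $f$ can be taken to be the identity. \autoref{main-technical} then delivers the desired equivalence $\Betti(\gf[Q])\TO\Betti(\gf[Q'])$ induced by~$\alpha$.

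Finally, feeding this induced equivalence of Betti categories into \autoref{main-theorem-2} gives the precise formula $\cx{P}\cong\ind_\varrho\res_{\alpha\varrho}\cx{P}'$ asserted in the ``In particular'' clause. I do not foresee any substantive obstacle: once \autoref{betti-in-lub} is in hand, the proof is essentially bookkeeping required to match the hypotheses of \autoref{main-technical}. The only step deserving any care is the identification of both restrictions with the constant functor $\const_\gf$, which is what makes it possible to skip constructing $f$ by hand and instead use the identity.
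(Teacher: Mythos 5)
Your proposal is correct and takes exactly the approach the paper intends: the paper's entire ``proof'' is the sentence preceding the theorem statement, saying that it follows immediately from \autoref{main-technical} via \autoref{betti-in-lub}, and your write-up simply supplies the bookkeeping (checking the Artinian and EI hypotheses noted at the start of \autoref{TORIC}, using \autoref{betti-in-lub} for the minsupp containment, and observing that both restrictions are $\const_\gf$ so that $f$ can be the identity). The only cosmetic point worth flagging is that in your final sentence the symbols $\varrho$ and $\alpha$ silently switch meaning --- from the lub-category inclusion and lub-category equivalence to the Betti-category inclusion and the induced Betti-category equivalence as in \autoref{main-theorem-2} --- but this causes no mathematical issue.
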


\begin{remark}
It is straightforward to verify that the lub-categories of the pointed affine semigroups from Examples~\ref{twisted-cubic} and~\ref{generic-toric} are not equivalent, even though the corresponding Betti categories are equivalent (even isomorphic) for every field~$\gf$.
\end{remark}

%%%%%%%%%%%%%%%%%%%%%%%%%%%%%%%%%%%%%%%%%%%%%%%%%%%%%%%%%%%%%%%%%%%%%%%%%%%%%%

\begin{bibdiv}
\begin{biblist}

\bib{Bayer-Sturmfels}{article}{
   author={Bayer, Dave},
   author={Sturmfels, Bernd},
   title={Cellular resolutions of monomial modules},
   journal={J. Reine Angew. Math.},
   volume={502},
   date={1998},
   pages={123--140},
   issn={0075-4102},
   review={\MR{1647559}},
   doi={\DOI{10.1515/crll.1998.083}},
}

\bib{Bruns-Gubeladze}{book}{
   author={Bruns, Winfried},
   author={Gubeladze, Joseph},
   title={Polytopes, rings, and $K$-theory},
   series={Springer Monographs in Mathematics},
   publisher={Springer-Verlag}, % Dordrecht},
   date={2009},
   pages={xiv+461},
   isbn={978-0-387-76355-2},
   review={\MR{2508056}},
   note={DOI \DOI{10.1007/b105283}},
}

\bib{Church-Ellenberg-Farb}{article}{
   author={Church, Thomas},
   author={Ellenberg, Jordan},
   author={Farb, Benson},
   title={FI-modules and stability for representations of symmetric groups},
   journal={Duke Math. J.},
   volume={164},
   date={2015},
   number={9},
   pages={1833--1910},
   issn={0012-7094},
   review={\MR{3357185}},
   doi={\DOI{10.1215/00127094-3120274}},
}

\bib{Church-Ellenberg-Farb-Nagpal}{article}{
   author={Church, Thomas},
   author={Ellenberg, Jordan},
   author={Farb, Benson},
   author={Nagpal, Rohit},
   title={FI-modules over Noetherian rings},
   journal={Geom. Topol.},
   volume={18},
   date={2014},
   number={5},
   pages={2951--2984},
   issn={1465-3060},
   review={\MR{3285226}},
   doi={\DOI{10.2140/gt.2014.18.2951}},
}

\bib{tomDieck}{book}{
   author={tom Dieck, Tammo},
   title={Transformation groups},
   series={de Gruyter Studies in Mathematics},
   volume={8},
   publisher={Walter de Gruyter \& Co.}, %, Berlin},
   date={1987},
   pages={x+312},
   isbn={3-11-009745-1},
   review={\MR{889050}},
   note={DOI \DOI{10.1515/9783110858372.312}},
}

\bib{Fulton}{book}{
   author={Fulton, William},
   title={Introduction to toric varieties},
   series={Annals of Mathematics Studies},
   volume={131},
   % note={The William H. Roever Lectures in Geometry},
   publisher={Princeton University Press}, %, Princeton, NJ},
   date={1993},
   pages={xii+157},
   isbn={0-691-00049-2},
   review={\MR{1234037}},
}

\bib{GPW}{article}{
   author={Gasharov, Vesselin},
   author={Peeva, Irena},
   author={Welker, Volkmar},
   title={The lcm-lattice in monomial resolutions},
   journal={Math. Res. Lett.},
   volume={6},
   date={1999},
   number={5-6},
   pages={521--532},
   issn={1073-2780},
   review={\MR{1739211}},
   doi={\DOI{10.4310/MRL.1999.v6.n5.a5}},
}

\bib{Lueck}{book}{
   author={L\"uck, Wolfgang},
   title={Transformation groups and algebraic $K$-theory},
   series={Lecture Notes in Mathematics},
   volume={1408},
%  note={Mathematica Gottingensis},
   publisher={Springer-Verlag}, %, Berlin},
   date={1989},
   pages={xii+443},
   isbn={3-540-51846-0},
   review={\MR{1027600}},
   note={DOI \DOI{10.1007/BFb0083681}},
}

\bib{kc}{article}{
   author={L\"uck, Wolfgang},
   author={Reich, Holger},
   author={Rognes, John},
   author={Varisco, Marco},
   title={Algebraic $K$-theory of group rings and the cyclotomic trace map},
   date={2015},
   status={preprint, available at \hurl{arxiv.org/abs/1504.03674}},
}

\bib{Miller-Sturmfels}{book}{
   author={Miller, Ezra},
   author={Sturmfels, Bernd},
   title={Combinatorial commutative algebra},
   series={Graduate Texts in Mathematics},
   volume={227},
   publisher={Springer-Verlag}, %, New York},
   date={2005},
   pages={xiv+417},
   isbn={0-387-22356-8},
   review={\MR{2110098}},
   note={DOI \DOI{10.1007/b138602}},
}

\bib{Peeva}{book}{
   author={Peeva, Irena},
   title={Graded syzygies},
   series={Algebra and Applications},
   volume={14},
   publisher={Springer-Verlag}, % London, Ltd., London},
   date={2011},
   pages={xii+302},
   isbn={978-0-85729-176-9},
   review={\MR{2560561}},
   note={DOI \DOI{10.1007/978-0-85729-177-6}},
}

\bib{Peeva-Sturmfels}{article}{
   author={Peeva, Irena},
   author={Sturmfels, Bernd},
   title={Syzygies of codimension~$2$ lattice ideals},
   journal={Math. Z.},
   volume={229},
   date={1998},
   number={1},
   pages={163--194},
   issn={0025-5874},
   review={\MR{1649322}},
   doi={\DOI{10.1007/PL00004645}},
}

\bib{Peeva-Sturmfels-2}{article}{
   author={Peeva, Irena},
   author={Sturmfels, Bernd},
   title={Generic lattice ideals},
   journal={J. Amer. Math. Soc.},
   volume={11},
   date={1998},
   number={2},
   pages={363--373},
   issn={0894-0347},
   review={\MR{1475887}},
   doi={\DOI{10.1090/S0894-0347-98-00255-0}},
}

\bib{bettiposets}{article}{
   author={Tchernev, Alexandre},
   author={Varisco, Marco},
   title={Modules over categories and Betti posets of monomial ideals},
   journal={Proc. Amer. Math. Soc.},
   volume={143},
   date={2015},
   number={12},
   pages={5113--5128},
   issn={0002-9939},
   review={\MR{3411130}},
   doi={\DOI{10.1090/proc/12643}},
}

\bib{Villarreal}{book}{
   author={Villarreal, Rafael},
   title={Monomial algebras},
   series={Monographs and Research Notes in Mathematics},
   edition={2},
   publisher={CRC Press}, %, Boca Raton, FL},
   date={2015},
   pages={xviii+686},
   isbn={978-1-4822-3469-5},
   review={\MR{3362802}},
}

\bib{Weibel}{book}{
   author={Weibel, Charles},
   title={An introduction to homological algebra},
   series={Cambridge Studies in Advanced Mathematics},
   volume={38},
   publisher={Cambridge University Press}, % Cambridge},
   date={1994},
   pages={xiv+450},
   isbn={0-521-43500-5},
   isbn={0-521-55987-1},
   review={\MR{1269324}},
   note={DOI \DOI{10.1017/CBO9781139644136}},
}

\end{biblist}
\end{bibdiv}

%%%%%%%%%%%%%%%%%%%%%%%%%%%%%%%%%%%%%%%%%%%%%%%%%%%%%%%%%%%%%%%%%%%%%%%%%%%%%%

\vfill

\end{document}